\pgfplotsset{compat=1.18}
\def\BibTeX{{\rm B\kern-.05em{\sc i\kern-.025em b}\kern-.08em
		T\kern-.1667em\lower.7ex\hbox{E}\kern-.125emX}}
\newcommand*{\vm}[1]{\boldsymbol{#1}}
\newcommand*{\trans}{^{\top}}
\newcommand*{\nablax}{\nabla_{\vm x_i}}
\newcommand*{\nablaxx}{\nabla_{\vm x_i \vm x_i}^2}
\newcommand*{\nablaxj}{\nabla_{\vm x_j}}
\newcommand*{\neighs}{j \in \mathcal{N}_i}
\newcommand*{\agents}{i \in \mathcal{V}}
\newcommand*{\Ni}[1]{\vm{#1}_{\mathcal N_i}}
\newcommand*{\Nj}[1]{\vm{#1}_{\mathcal N_j}}
\newcommand*{\inds}{^{*}}
\newcommand*{\indqp}{^{q-1}}
\newcommand*{\indq}{^{q}}
\newcommand*{\st}{\operatorname{s.\!t.}}
\newcommand*{\smallspace}{\,\,}
\newcommand*{\indqn}{^{q+1}}
\DeclareMathOperator*{\diag}{\mathrm{diag}}
\DeclareMathOperator*{\blkdiag}{\mathrm{blkd}}
\DeclareMathOperator*{\sign}{\mathrm{sign}}
\newtheorem{theorem}{Theorem}
\newtheorem{lemma}{Lemma}
\newtheorem{assumption}{Assumption}
\newtheorem{corollary}{Corollary}
\newtheorem{remark}{Remark}
\newtheorem{example}{Example}
\newenvironment{contexample}{
	\addtocounter{example}{-1} \begin{example}[cont.]}{
\end{example}}
\newenvironment{assumptionp}[1]{
  
  \assumptionalt
}{\endassumptionalt}
\definecolor{mycolor1}{rgb}{0.00000,0.44700,0.74100} 
\definecolor{mycolor2}{rgb}{0.85000,0.32500,0.09800} 
\definecolor{mycolor3}{rgb}{0.9290 0.6940 0.1250} 
\definecolor{mycolor4}{rgb}{0.4940 0.1840 0.5560} 
\definecolor{mycolor5}{rgb}{0.4660 0.6740 0.1880} 
\definecolor{mycolor6}{rgb}{0.3010 0.7450 0.9330} 
\definecolor{mycolor7}{rgb}{0.6350 0.0780 0.1840} 
\pgfplotsset{label style={font=\footnotesize},ticklabel style={font=\footnotesize},legend style={font=\tiny}, legend cell align=left}
\pgfplotsset{select coords between index/.style 2 args={
		x filter/.code={
			\ifnum\coordindex<#1\fi
			\ifnum\coordindex>#2\fi
		}
}}
\def\BibTeX{{\rm B\kern-.05em{\sc i\kern-.025em b}\kern-.08em
T\kern-.1667em\lower.7ex\hbox{E}\kern-.125emX}}
\begin{document}
\title{Enforcing Convergence in Sensitivity-Based Distributed Programming via Transformed Primal-Dual Updates}
\author{Maximilian Pierer von Esch, Andreas V\"olz, and Knut Graichen, \IEEEmembership{Senior Member, IEEE}
\thanks{This work is funded by the Deutsche Forschungsgemeinschaft (DFG, German Research Foundation) under project no. 464391622.}
\thanks{The authors are with the Chair of Automatic Control, Friedrich-Alexander-Universität Erlangen-Nürnberg (FAU), Erlangen, Germany.
Email: \{maximilian.v.pierer, andreas.voelz, knut.graichen\}@fau.de}}
\maketitle
\vspace{-4mm}
	%
	%
\begin{abstract}
Sensitivity-based distributed programming (SBDP) is a decomposition method for solving large-scale nonlinear programs over graph-structured networks. However, its convergence depends on the strength and structure of subsystem coupling. To address this limitation, we propose SBDP+, a distributed optimization scheme based on a structured primal-dual operator design. The method employs first-order sensitivities and primal decomposition to construct low-dimensional local subproblems that are solved in parallel using neighbor-to-neighbor communication. In contrast to SBDP, SBDP+ introduces a novel primal-dual update that ensures convergence under general coupling structures. Specifically, we establish local linear convergence for non-convex problems under standard regularity conditions. Numerical experiments demonstrate the effectiveness of SBDP+ and highlight improved robustness compared to SBDP and representative distributed optimization methods in applications such as statistical learning.
	\end{abstract}
	\begin{IEEEkeywords}
		Distributed optimization, decomposition, sensitivities, multi-agent systems, statistical learning
	\end{IEEEkeywords}
	%
	%
\vspace{-2mm}
\section{Introduction}
\label{sec:intro}

Large-scale nonlinear programs (NLP) arise in a wide range of applications, including statistical learning with high-dimensional data or features~\cite{Boyd}, economical optimization problems~\cite{Woolridge}, electrical power systems~\cite{Molzahn}, and distributed nonlinear model predictive control (DMPC)~\cite{Christofides}. These problems often feature separable objectives and structured couplings, which distributed optimization techniques can exploit by decomposing the global problem into smaller, parallelizable subproblems. This enables local computation with limited communication, offering significant advantages over centralized methods in terms of scalability, flexibility, and robustness.
%

While distributed optimization is well understood for convex problems and has lead to the development of many algorithms, e.g., distributed projected gradient descent~\cite{Xi}, dual decomposition \cite{Everett}, distributed non-smooth Newton methods~\cite{Frasch} or the alternating direction method of multipliers~(ADMM)~\cite{Boyd}, many practically relevant applications are inherently non-convex. 
Existing distributed methods for non-convex NLPs often distribute the internal computations of centralized nonlinear programming schemes such as sequential quadratic programming (SQP), interior-point, or augmented Lagrangian methods. Representative examples include distributed SQP \cite{Stomberg}, distributed interior point methods \cite{Engelmann3}, and ALADIN-type schemes \cite{Houska,Engelmann}. While these duality-based approaches can achieve strong convergence properties, they typically rely on consensus reformulations, local copies, or centralized coordination steps, which increase communication and coordination complexity and may limit scalability.
In contrast, primal decomposition methods iterate directly on the shared variables, avoiding the need for strong duality and resulting in smaller subproblem dimensions. Existing approaches include Jacobi-type iterations~\cite{Doan}, forward-backward splitting schemes~\cite{Tang}, and Schwarz-type decomposition methods \cite{Shin}.
%

Sensitivity-based distributed programming (SBDP) recently emerged as a promising primal-decomposition-based approach for non-convex NLPs. Instead of distributing a centralized solver, SBDP directly decomposes the central NLP and coordinates neighboring subproblems through exchanged first-order sensitivities. This yields low-dimensional local NLPs, purely neighbor-to-neighbor communication, and avoids consensus variables or centralized coordination. Prior work demonstrates its effectiveness in linear-quadratic settings~\cite{Scheu,Schneider2,Schneider3}, optimal control~\cite{Pierer3}, and general NLPs~\cite{Pierer}. However, the convergence of SBDP relies on structural assumptions on the coupling, such as a generalized diagonal dominance condition (GDDC)~\cite{Pierer,Scheu} or sufficiently weak interactions, e.g., induced by short prediction horizons in DMPC~\cite{Pierer3}. When these conditions are violated, neither step-size selection nor proximal regularization alone can guarantee convergence. Consequently, the applicability of SBDP remains restricted for strongly coupled systems.

Motivated by these limitations, we propose SBDP+, a sensitivity-based decomposition method that removes these coupling restrictions through a structured primal-dual transformation of the underlying fixed-point iteration. Unlike SBDP, which relies on a contraction induced by the problem structure, SBDP+ enforces local contractivity directly at the operator level and thereby avoids GDDC-type assumptions. We establish local linear convergence for general non-convex NLPs under standard NLP regularity assumptions. Hereby, non-convexity is understood as in e.g. \cite{Houska,Engelmann,Stomberg}, that is, we assume the existence of regular local minima, while allowing the objective and constraint functions to be non-convex.

The proposed method retains the main advantages of SBDP: low-dimensional local NLPs, neighbor-to-neighbor communication without local copies or centralized coordination and, unlike Schwarz-type methods \cite{Shin}, decomposition directly on the original graph structure, while substantially enlarging the class of NLPS for which convergence can be guaranteed. Furthermore, the method first decomposes the NLP rather than distributing the computations of a centralized solver, reducing communication and allowing for different local solvers. In addition, we derive extensions under relaxed second-order conditions and characterize the trade-off between regularity assumptions and communication requirements.

The remainder of the paper is organized as follows. Section~\ref{sec:ProblemStatement} introduces the problem formulation. Section~\ref{sec:Algorithmic_development} presents SBDP+, while Section~\ref{sec:Algorithmic_Analysis} provides the convergence analysis. Extensions under relaxed second-order assumptions are discussed in Section~\ref{sec:Alg_extensions}, numerical results are presented in Section~\ref{sec:Numerical_Evaluation}, and conclusions are drawn in Section~\ref{sec:Conclusion}.

Notation: For a vector $\vm v \in \mathbb{R}^n $ and matrix $\vm M \in \mathbb{R}^{n\times m}$, the notations $[\vm v]_i$ and $[\vm M]_i$ refer to the $i$-th component and $i$-th row of $\vm v$ and $\vm M$, respectively. Given an ordered index set $\mathcal{S} \subset \mathbb{N}$, $[\vm M]_{\mathcal{S}}$ denotes the sub-matrix of rows $[\vm M]_i$, $i\in \mathcal{S}$ and $[\vm v_i]_{i \in \mathcal{S}}$ denotes the stacked vector of all $\vm v_i \in \mathbb{R}^{n_i}$. Norms without subscript, i.e., $\|\cdot\|$, refer to the Euclidean or induced spectral norm. The maximum and minimum eigenvalues of a square matrix are denoted by $\bar{\lambda}(\cdot)$ and $\underline \lambda(\cdot)$. The integer set from $0$ to $N$ is $\mathbb{N}_{[0,N]}$. An open $r$-neighborhood of a point $\vm v_0 \in \mathbb{R}^n$ is defined as $\mathcal{B}_r(\vm v_0):=\{ \vm v \!\in \mathbb{R}^n\,|\, \|\vm v - \vm v_0\| < r\}$. For a vector-valued function $\vm f(\vm x) : \mathbb{R}^n \rightarrow \mathbb{R}^m$, the Jacobian is $\nabla \vm f(\vm x) = [\nabla f_1(\vm x), \dots, \nabla f_m(\vm x)]\trans \in \mathbb{R}^{m\times n}$, whereby $\nabla f_i$ is the gradient of the $i$-th component and $\nabla\trans f_i$ its transpose. 
%
\section{Problem Statement}
\label{sec:ProblemStatement}
We consider NLPs structured over a given, undirected, connected graph $\mathcal{G}=(\mathcal{V}, \mathcal{E})$, where the node set $\mathcal{V} = \{1, \dots, M\}$ represents a collection of subsystems, referred to as agents. The edge set $\mathcal{E} \subset \mathcal{V} \times \mathcal{V}$ encodes the coupling between them.
The agents cooperatively solve the central NLP
\begin{subequations}\label{eq:central_NLP}
	\begin{alignat}{3}
		\min_{\vm x_1,\dots,\vm x_M} &\quad  \sum_{i\in\mathcal V} f_i(\vm x_i, \Ni{x}) \label{eq:central_costFunction}\\
		~\st \quad&\quad \vm  g_i( \vm x_i,\Ni{x}) = \vm 0 & \quad|\smallspace&\vm \lambda_i\,, & \quad& \forall \agents \label{eq:central_equality} \\
		&\quad \vm h_i(\vm x_i, \Ni{x})\leq \vm 0 & \quad|\smallspace &\vm \mu_i\,, & \quad& \forall \agents \label{eq:central_inequality}
	\end{alignat}
\end{subequations}
with the local decision variables $\vm x_i \in \mathbb{R}^{n_i}$, $\agents$. The notation after the constraints in \eqref{eq:central_NLP} highlights that the quantities  $\vm \lambda_i \in \mathbb{R}^{n_{gi}}$ and $\vm \mu_i \in \mathbb{R}^{n_{hi}}$ represent the Lagrange multipliers associated with the constraints \eqref{eq:central_equality} and \eqref{eq:central_inequality}, respectively. The set ${\mathcal{N}_i := \{j \in \mathcal{V}\,|\,(i,j) \in \mathcal{E}, i \neq j\}}$ denotes the neighbors $ j \in \mathcal{V}$ that are directly coupled with agent $\agents$. These couplings may arise in the objective function \eqref{eq:central_costFunction} and/or in the (in)equality constraints \eqref{eq:central_equality} -- \eqref{eq:central_inequality} through the neighboring decision variables $\vm x_j \in \mathbb{R}^{n_j}$ for each $\neighs$. The stacked notation $\Ni{x}:= [\vm x_j]_{\neighs}$ collects all neighboring variables. Accordingly, each agent minimizes a local objective function $f_i:\mathbb{R}^{n_i}\times\mathbb{R}^{n_{\mathcal{N}_i}} \rightarrow \mathbb{R}$ subject to (coupled) equality constraints $\vm g_i:\mathbb{R}^{n_i}\times\mathbb{R}^{n_{\mathcal{N}_i}} \rightarrow \mathbb R^{n_{gi}}$ and (coupled) inequality constraints $\vm h_i:\mathbb{R}^{n_i}\times\mathbb{R}^{n_{\mathcal{N}_i}} \rightarrow \mathbb R^{n_{hi}}$ with $n_{\mathcal{N}_i}:=\sum_{\neighs} n_j$. All functions appearing in the central NLP \eqref{eq:central_NLP} are assumed to be at least three times continuously differentiable. 
We define the central Lagrangian of problem~\eqref{eq:central_NLP} as
\begin{align}
	L(\vm x, \vm \lambda, \vm \mu) = \sum_{\agents}L_i(\vm x_i, \vm \lambda_i, \vm \mu_i, \Ni{x})\,, \label{eq:central_Lagrangian}
\end{align}
with the local Lagrangians $L_i = L_i(\vm x_i, \vm \lambda_i, \vm \mu_i,\Ni{x})$ given by
\begin{align}
	L_i:= f_i(\vm x_i, \Ni{x}) + \vm \lambda_i\trans \vm  g_i( \vm x_i,\Ni{x}) + \vm \mu_i\trans  \vm h_i(\vm x_i, \Ni{x}) \label{eq:local_Lagrangian}
\end{align}
for every $\agents$.  The centralized decision variable is denoted by $\vm x =[\vm x_i]_{\agents} \in \mathbb R^{n}$, and the stacked multipliers by $ \vm \lambda =  [\vm \lambda_i]_{\agents} \in \mathbb{R}^{n_g}$ and $\vm \mu = [\vm \mu_i]_{\agents} \in \mathbb{R}^{n_h}$. These are jointly represented in the primal-dual solution vector  $ \vm p:=[\vm x\trans,\, \vm \lambda\trans,\, \vm \mu\trans ]\trans \in \mathbb{R}^p$ of NLP \eqref{eq:central_NLP} with total dimension $p=n+n_g+n_h$. The centralized constraints are $\vm g(\vm x):=[\vm g_i(\vm x_i, \Ni{x})]_{\agents}$ and $\vm h(\vm x):=[\vm h_i(\vm x_i, \Ni{x})]_{\agents}$. In large-scale systems with many agents, the centralized NLP~\eqref{eq:central_NLP} becomes high-dimensional and computationally challenging. Therefore, the focus is on developing a distributed solution approach that exploits the underlying graph structure and relies only on neighbor-to-neighbor communication.

\section{Algorithmic development}
\label{sec:Algorithmic_development}
The following section develops SBDP+ for solving structured NLPs \eqref{eq:central_NLP} in a distributed setting. The key difficulty addressed is that standard SBDP updates do not, in general, induce a contractive primal-dual iteration under arbitrary coupling, since the associated fixed-point operator may become unstable in the absence of additional structural assumptions. To overcome this limitation, we introduce a modified primal-dual update scheme based on the solution of sensitivity-augmented local NLPs. This yields a structured transformation of the resulting primal-dual search directions, designed to enforce convergence. First, we derive the modified local NLPs used to compute primal-dual search directions, then introduce the corresponding primal-dual update, and finally present the complete distributed algorithm. 

\subsection{Modified local NLPs}
Each agent $\agents$ constructs a decoupled, local NLP in terms of a local search direction $\vm s_i \in \mathbb{R}^{n_i}$ which is subsequently solved in each iteration $q=1,2,\dots$ of SBDP+
	\begin{subequations}\label{eq:local_NLP}
		\begin{alignat}{2}
			\min_{\vm s_i} \quad & \bar f_i\indq(\vm s_i) \label{eq:local_costFunction}
			\\  \st \quad&  \vm{\bar g}_i\indq(\vm s_i)= \vm 0& \quad|\smallspace& \vm \nu_{i} \label{eq:local_equality}  \\
			 & \vm{\bar h}_i\indq(\vm s_i)\leq\vm 0 &\quad|\smallspace& \vm \kappa_{i} \label{eq:local_inequality}
		\end{alignat}
	\end{subequations}
	with the modified local cost functions
	\begin{equation} \label{eq:definition_costFunction}
	\bar f_i^{q}(\vm s_i) := f_i(\vm x_i\indq + \vm s_i, \Ni{x}\indq) +\frac{\rho_i}{2}\|\vm s_i\|^2\! +\! \sum_{\neighs}\!\! \nablax\trans L_j\indq \vm s_i
	\end{equation}
	and local equality and inequality constraints
	\begin{equation}\label{eq:definition_constraints}
		\vm{\bar g}_i\indq(\vm s_i) := \vm g_i( \vm x_i\indq + \vm s_i,\Ni{x}\indq),\smallspace \vm{\bar h}_i\indq(\vm s_i):= \vm h_i( \vm x_i\indq + \vm s_i,\Ni{x}\indq)
	\end{equation}
	 which depend explicitly on the search direction $\vm s_i$ and implicitly on the value of $\vm p$ at iteration $q$ indicated by the superscript. Similar to \eqref{eq:central_NLP}, the notation in~\eqref{eq:local_NLP} emphasizes that $\vm \nu_{i} \in \mathbb R^{n_{g_i}}$ and $\vm \kappa_{i} \in \mathbb R^{n_{h_i}}$ are local Lagrange multipliers for the constraints \eqref{eq:local_equality} and \eqref{eq:local_inequality}. Let $ \vm y_i := [\vm s_i\trans, \vm \nu_i\trans, \vm \kappa_i\trans ]\trans \in \mathbb{R}^{p_i}$, $p_i = n_i + n_{gi} + n_{hi}$ be the primal-dual solution of \eqref{eq:local_NLP} and 
	 \begin{equation} \label{eq:local_NLP_Lagrangian}
		\bar{L}_i\indq(\vm y_i) = \bar f_i\indq(\vm s_i) + \vm \nu_i\trans \vm{\bar g}_i\indq(\vm s_i) + \vm \kappa_i\trans \vm{\bar h}_i\indq(\vm s_i)
	 \end{equation}
	the Lagrangian of the local NLPs \eqref{eq:local_NLP} at some iteration~$q$. The couplings are resolved via a primal decomposition approach which is realized by treating the neighboring optimization variables $\vm x_j$, $\neighs$ as fixed at their value of the current iteration $q$. The objective~\eqref{eq:definition_costFunction} combines three parts: The first term represents the agent's local objective, in direction $\vm s_i$, the second is quadratic regularization term $\frac{\rho_i}{2}\|\vm s_i\|^2$ where $\rho_i\in \mathbb{R}_{\geq0}$ is a suitable penalty parameter, and the third are sensitivity terms. They collectively account for the first-order influence that a step in direction $\vm s_i$ has on the neighboring agents' optimal value function. Specifically, it is defined as the corresponding directional derivative of the neighbors' local Lagrangian $L_j(\cdot)$, $\neighs$, given by the gradient of the current iteration $ \nablax L_j\indq:= \nablax L_j(\vm x_j\indq, \vm \lambda_j\indq, \vm \mu_j\indq, \Nj{x}\indq)$, in direction of $\vm s_i$. The required gradient $\nablax L_j(\cdot)$ is computed in straightforward fashion from \eqref{eq:local_Lagrangian} as
	\begin{align}\label{eq:gradient_withoutneighboraffine}
		& \nablax L_j(\vm x_j, \vm \lambda_j, \vm \mu_j, \Nj{x}) =  \nablax f_j(\vm x_j, \Nj{x}) \nonumber                        \\
		& \phantom{=}+ \nablax\trans \vm g_j(\vm x_j, \Nj{x}) \vm \lambda_j + \nablax\trans \vm h_j(\vm x_j, \Nj{x}) \vm \mu_j\,,
	\end{align}
where $\vm \lambda_j$ and $\vm \mu_j$ are the Lagrange multipliers of neighbor $\neighs$. Note that $L_j(\cdot)$, $f_j(\cdot)$, $\vm g_j(\cdot)$, and $\vm h_j(\cdot)$ in \eqref{eq:gradient_withoutneighboraffine} explicitly depend on $\vm x_i$ via $\Nj{x}$. However, computing \eqref{eq:gradient_withoutneighboraffine} may involve decision variables of second-order neighbors of agent $i$, i.e., $\mathcal{N}_i^{2} := (\cup_{\neighs}  \mathcal{N}_j) \setminus(\mathcal{N}_i \cup \{i\})$, which are typically not available in a neighbor-to-neighbor communication network. Thus, each agent $\agents$ computes the mirroring gradient $\nablaxj L_i\indq$ instead and sends it to the respective neighbors $\neighs$. 
\subsection{Relation to SBDP and standard update limitation}
	When, after solving \eqref{eq:local_NLP}, the full-step update 
\begin{align} \label{eq:SBDP_update}
	\vm x_i\indqn = \vm x_i\indq + \vm s_i\indq\,,\quad \vm \lambda_i\indqn = \vm \nu_i\indq\,, \quad \vm \mu_i\indqn = \vm \kappa_i\indq
\end{align}	
is applied, the recently proposed SBDP method \cite{Pierer,Scheu} is recovered. However, its convergence depends on the coupling structure between agents as shown in the following example. 
	\begin{example} \label{ex:1}
		\textit{
	Consider the optimization problem 
	\begin{alignat}{1} \label{eq:example_NLP1}
		\min_{x_1,\, x_2} \quad 0.5x_1^2 + 0.5x_2^2 \quad \st \smallspace x_1 + ax_2 = 0 
	\end{alignat}
which is of the form \eqref{eq:central_NLP} with $f_{i}(x_i,x_j) = 0.5x_i^2$, $i\in \{1,\,2\}$, $g_1(x_1,x_2)= x_1 +ax_2$, and coupling parameter $a \in \mathbb{R}$. Explicitly solving the NLPs \eqref{eq:local_NLP} for some $\rho_i=\rho$, $\forall \agents$ and applying the full-step update, leads to the following recursion
\begin{align} \label{eq:discrete_time_exampleSystem}
	\begin{bmatrix}
		x_1\indqn \\ 
		x_2\indqn \\
		\lambda_1\indqn
	\end{bmatrix} = \begin{bmatrix}
		0 &-a &0 \\ 0 & \frac{\rho}{1+\rho}&- \frac{a}{1+\rho}\\ \rho & (1+\rho)a & 0
	\end{bmatrix}\begin{bmatrix}
		x_1\indq \\ 
		x_2\indq \\
		\lambda_1\indq
	\end{bmatrix}
\end{align}
with eigenvalues $ \zeta_1 = \frac{\rho}{\rho +1} $ and $ \zeta_{2/3} = \pm ja$. Thus, the recursion is asymptotically stable if and only if $|a|<1$ and cannot be stabilized by any choice of $\rho$. 
}
\end{example}
Example \ref{ex:1} shows that divergence caused by strong coupling cannot, in general, be removed by tuning the regularization parameter $\rho_i$.
Moreover, the simple damped update
\begin{align} \label{eq:damped_SBDP_update}
\vm x_i\indqn \!=\! \vm x_i\indq \!+\! \alpha\vm s_i\indq,\, [\vm \lambda_i\indqn\!, \vm \mu_i\indqn] \!=\! [\vm \lambda_i\indq, \vm \mu_i\indq] \!+\! \alpha [\vm s_{\lambda,i}\indq, \vm s_{\mu,i}\indq]
\end{align}	
with step size $\alpha\in \mathbb{R}_{>0}$ and $\vm s_{\lambda,i}\indq := \vm \kappa_i\indq - \vm \lambda_i\indq $, $\vm s_{\mu,i}\indq := \vm \nu_i\indq - \vm \mu_i\indq $ does generally not restore convergence either, as demonstrated in the next example.
\begin{contexample}
\textit{
By adding the constraint $g_2(x_2,x_1)= x_1 + x_2 = 0 $ to \eqref{eq:example_NLP1}, one finds that for $a=4$ the eigenvalues of the recursion \eqref{eq:discrete_time_exampleSystem} are $ \zeta_{1/2} = 1+ \alpha $, $ \zeta_{3/4} = 1 - 3\alpha$ which are independent of $\rho_i$. Thus, no choice of $\alpha$ or $\rho_i$, {$\forall \agents$} exists to enforce convergence in this case.}
\end{contexample}

\subsection{Primal-dual update}
These examples reveal that divergence is caused by the structure of the induced primal-dual recursion itself rather than insufficient regularization or damping. To overcome this limitation, SBDP+ modifies the primal-dual update through a structured matrix-valued transformation. Rather than directly substituting the local NLP solution, e.g. \eqref{eq:SBDP_update}, the update applies a local preconditioned correction designed to stabilize the linearized SBDP-recursion. To this end, after computing the local search direction $\vm s_i\indq$ and multipliers $\vm \nu_i\indq$ and $\vm \kappa_i\indq$ from \eqref{eq:local_NLP}, each agent $\agents$ updates its primal-dual variables as
\begin{align} \label{eq:primal_dual_update}
\vm p_i\indqn = \vm p_i \indq + \alpha \vm P_i\indq(\vm y_i\indq) (\vm y_i\indq - \vm d_i(\vm p_i\indq))
\end{align}
with the step size $\alpha\in(0,1)$, $\vm p_i:= [\vm x_i\trans, \vm \lambda_i\trans, \vm \mu_i\trans]\trans \in \mathbb{R}^{p_i}$, and offset $\vm d_i(\vm p_i) := [\vm 0\trans, \vm \lambda_i \trans, \vm \mu_i\trans]\trans \in \mathbb{R}^{p_i}$. The matrix-valued mapping $\vm P_i\indq : \mathbb{R}^{p_i} \rightarrow \mathbb{R}^{p_i \times p_i}$ is given as
\begin{align} \label{eq:MixingMatrix}
\vm P_i\indq(\vm y_i) \!\!=\!\! \begin{bmatrix}\!
\nabla_{\vm s_i \vm s_i}^2 \bar{L}_i\indq( \vm y_i) \!&\!\! \nabla_{\vm s_i}\trans \vm{\bar g}_i\indq(\vm s_i) \!\!&\!\! \nabla_{\vm s_i}\trans \vm{\bar h}_i\indq(\vm s_i)\\
-\beta \nabla_{\vm s_i} \vm{\bar g}_i\indq(\vm s_i) \!&\! \vm 0 \!&\! \vm 0 \\
-\beta \vm K_i \nabla_{\vm s_i} \vm{\bar h}_i\indq(\vm s_i) \!\!\!\!\!&\! \vm 0 \!&\! -\beta \vm{\bar H}_i\indq(\vm s_i)
\end{bmatrix}
\end{align}
for each $\agents$. Hereby, $\beta \in \mathbb{R}_{>0}$ describes an additional step size for the dual updates, $\vm K_i:= \diag([\kappa_{1,i},\dots,\kappa_{n_{hi},i}])$ is a diagonal matrix consisting of each $\kappa_{k,i}$ in $\vm \kappa_i$, $k \in\mathbb{N}_{[1,n_{hi}]}$, and $\vm{\bar H}_i\indq(\vm s_i)= \diag(\vm{\bar h}_i\indq(\vm s_i))$. The matrix $\vm P_i\indq$ acts as a structured local preconditioner for the primal-dual recursion which can be interpreted locally as an inexact Newton-type step on the KKT conditions of \eqref{eq:central_NLP}, cf. Section \ref{sec:Algorithmic_Analysis}. Its structure is chosen to modify the Jacobian of the linearized SBDP-recursion such that its spectrum can be shifted into the stability region by an appropriate choice of $\alpha$, even when scalar step-size damping alone is insufficient. Since $\vm P_i\indq(\cdot)$ depends only on local and neighboring information, this design remains compatible with neighbor-to-neighbor communication. Returning to Example \ref{ex:1}, we observe that this modification enforces convergence. 
\begin{contexample}
\textit{
Suppose now we apply \eqref{eq:primal_dual_update} with $\beta =1$. Then, we find the eigenvalues of the resulting recursion to be $\zeta_{1/2} = 1 - \frac{\alpha}{2} \pm j \frac{\alpha}{2} c_1(a)$ and $\zeta_{3/4} = 1 - \frac{\alpha}{2} \pm j \frac{\alpha}{2} c_2(a)$ with problem specific constants $c_1(a),c_2(a) \in \mathbb{R}_{>0}$ which depend on a. This shows that if $\alpha < \min\{\frac{4}{1 + c_1(a)^2},\frac{4}{1 + c_2(a)^2}\}$, the recursion is stable under the update \eqref{eq:primal_dual_update} for any $a \in \mathbb{R}$. 
}
\end{contexample} 
This example shows that a sufficiently small step size $\alpha$, combined with \eqref{eq:primal_dual_update}, ensures convergence of SBDP+, a result that the convergence analysis in Section \ref{sec:Algorithmic_Analysis} will formalize.

\subsection{Distributed optimization algorithm}
The decoupling of the central NLP~\eqref{eq:central_NLP} into the local NLPs~\eqref{eq:local_NLP} is exploited by solving the individual problems in parallel at the agent level, see Algorithm~\ref{alg:SENSI_search_direction} which is denoted as SBDP+. This requires a bi-directional, neighbor-to-neighbor communication network for which we assume the same graph structure $\mathcal{G}$ as in the coupling structure of the central NLP~\eqref{eq:central_NLP}.
\begin{algorithm}[tb]\small
	\caption{SBDP+ for each agent $\agents$}
	\begin{algorithmic}[1]
		\setcounter{ALG@line}{-1}
		\State Initialize $\vm p_i^0$; Choose step sizes $\alpha, \,\beta >0$, penalty parameter $\rho_i\geq0$, and tolerance $\epsilon>0$; send $\vm x_i^0$ to $\neighs$; set $q\leftarrow 0$
		\State Compute $\nablaxj  L_i\indq$ via \eqref{eq:gradient_withoutneighboraffine} for all $\neighs$
		\State Send  $\nablaxj  L_i\indq$ to the corresponding neighbor $\neighs$
		\State Compute $ \vm y_i\indq$ by solving the decoupled NLP \eqref{eq:local_NLP} to local optimality
		\State Assemble $\vm P_i\indq(\vm y_i\indq)$ and compute $\vm p_i\indqn$ via the update rule \eqref{eq:primal_dual_update}
		\State Send $\vm x_i\indqn $ to all neighbors $\neighs$
		\State Stop if a suitable convergence criterion, e.g., $\|\vm s\indq \|_\infty \leq \epsilon$, is met. Otherwise, return to line~$1$ with $q \leftarrow q+1$.
	\end{algorithmic}\label{alg:SENSI_search_direction}
\end{algorithm}
%
In Step 1, each agent computes the partial derivatives $\nablaxj L_i\indq$ which are shared with neighboring agents in Step 2 to enable the evaluation of the local cost function \eqref{eq:definition_costFunction}. Subsequently, each agent independently and in parallel solves the local NLP~\eqref{eq:local_NLP} in Step~3, after which the primal and dual variables are updated via \eqref{eq:primal_dual_update}. Then, the new decision variable $\vm x_i\indqn$ is communicated to the neighbors in Step~5. A practical tuning guideline for the step sizes $\alpha$ and $ \beta$ as well as the penalty parameter $\rho_i$ will be discussed in Section \ref{subsec:practicalTuning}.
A possible stopping criterion is 
$
	\|\vm s\indq\|_\infty \leq \epsilon
$
with tolerance $\epsilon>0$. However, other stopping criteria such as first order optimality or a fixed number of iterations are possible. Algorithm \ref{alg:SENSI_search_direction} only involves local computations and requires two neighbor-to-neighbor communication steps in which a maximum number of $\sum_{\agents}2n_i|\mathcal{N}_i|$ floats needs to be sent system-wide per SBDP+ iteration. The framework assumes access to a reliable low-level solver for the local NLPs \eqref{eq:local_NLP} as we suppose these problems are solved to (local) optimality, although in practice the solution accuracy may affect convergence properties.
\section{Algorithmic Analysis}
\label{sec:Algorithmic_Analysis}
The analysis of Algorithm \ref{alg:SENSI_search_direction} is structured into three parts. The first part presents the central and local Karush-Kuhn-Tucker (KKT) conditions along with necessary regularity assumptions. The second section investigates the convergence towards a central KKT point and contains the proof of convergence of Algorithm \ref{alg:SENSI_search_direction} by viewing SBDP+ as a nonlinear discrete-time system. The last section contains a practical tuning guideline for the involved algorithmic parameters. 

\subsection{Preliminaries and optimality conditions}
We characterize the fixed point of the algorithm, by establishing the KKT conditions of the central NLP~\eqref{eq:central_NLP} as
	\begin{subequations}\label{eq:centralKKT}
	\begin{alignat}{2}
		\vm 0 &= \nabla_{\vm x_i} L(\vm x, \vm \lambda, \vm \mu)  \label{eq:centralKKT_gradient}\,, \quad &&\agents \\
		\vm 0 &= \vm g_i(\vm x_i, \Ni{x})\,, \quad &&\agents\\
		\vm 0 & = \vm U_i \vm h_i(\vm x_i, \Ni{x}),\, \vm h_i(\vm x_i, \Ni{x})\leq\vm  0,\,  \vm \mu_i \geq \vm 0,\ &&\agents
	\end{alignat}
\end{subequations}
	where $\vm U_i:= \diag([\mu_{1,i},\dots,\mu_{n_{hi},i}])$ is a diagonal matrix consisting of each $\mu_{k,i}$ in $\vm \mu_i$, $k \in\mathbb{N}_{[1,n_{hi}]}$.
	Furthermore, let $\vm p\inds:= [{\vm x\inds}\trans,\, {\vm \lambda\inds}\trans,\, {\vm \mu\inds}\trans ]\trans$ be a KKT point of~\eqref{eq:central_NLP}. The set of active inequality constraints at point $\vm x$ is denoted as
	\begin{align}
		\mathcal{A}(\vm x) := \{k \in\mathbb{N}_{[1,n_h]}\,|\, [\vm h(\vm x)]_k = 0  \} \,,   \label{eq:activesets}
	\end{align}
	while the set of inactive inequalities is $\mathcal{I}(\vm x) := \mathbb{N}_{[1,n_h]}\setminus \mathcal{A}(\vm x)$. 
	We make the following regularity assumption. 
	\begin{assumption}\label{ass:constraint_regularity}
	There exists a KKT point of \eqref{eq:central_NLP} satisfying
	\begin{enumerate}[i)]
		\item  strict complementarity (SC), i.e., $ [\vm \mu\inds]_k> 0$, $k \in \mathcal{A}(\vm x\inds)$,
		\item the linear constraint qualification (LICQ), i.e., the matrix $[\nabla_{\vm x}\trans \vm g(\vm x\inds), [\nabla_{\vm x} \vm h(\vm x\inds)]_{\mathcal{A}(\vm x\inds)}\trans]$ has full column rank.
	\end{enumerate}
\end{assumption}
The next assumption strengthens the standard second-order sufficiency condition (SOSC)~\cite{Nocedal} by requiring the Hessian of the central Lagrangian \eqref{eq:central_Lagrangian} to be uniformly positive definite in all directions, not just those in the critical cone.
\begin{assumption} \label{ass:uniform_SOSC}
	The KKT point of \eqref{eq:central_NLP} satisfies a uniform SOSC (USOSC), i.e., $\nabla_{\vm x \vm x}^2L(\vm x \inds\!, \vm \lambda\inds\!, \vm \mu\inds) \!\succ\!0$.
\end{assumption}
A relaxation to the standard SOSC with the corresponding algorithmic modifications is discussed in Section \ref{sec:Alg_extensions}. Regarding the local NLPs \eqref{eq:local_NLP}, the KKT conditions are  
	\begin{subequations}\label{eq:localKKT}
	\begin{alignat}{2}
		\vm 0 &= \nabla_{\vm s_i} \bar f_i\indq(\vm s_i) +  \nabla_{\vm s_i}\trans \vm{\bar g}_i\indq(\vm s_i) 	\vm \nu_i+ \nabla_{\vm s_i}\trans \vm{\bar h}_i\indq(\vm s_i) \vm \kappa_i
		 \label{eq:localKKT_gradient} \\
		\vm 0 &= \vm{\bar g}_i\indq(\vm s_i)\\
		\vm 0 & = \vm K_i \vm{\bar h}_i\indq(\vm s_i), \quad \vm{\bar h}_i\indq(\vm s_i)\leq\vm  0,\quad  \vm \kappa_i \geq \vm 0
	\end{alignat}
\end{subequations}
for every $\agents$ in some iteration $q$. Similar to \eqref{eq:activesets}, the sets
	\begin{align}
		\mathcal{A}_i\indq(\vm s_i):= \{k \in \mathbb{N}_{[1,n_{hi}]}\, |\, [\vm{\bar h}_i\indq(\vm s_i)]_k = 0  \}\label{eq:activelocalsets}
	\end{align}
and $\mathcal{I}_i^q(\vm s_i) := \mathbb{N}_{1,n_{hi}}\setminus \mathcal{A}_i\indq(\vm s_i)$ denote the active and inactive inequality constraints of the modified NLPs~\eqref{eq:local_NLP}, respectively. We define the mapping $ \vm \Phi: \mathbb{R}^p \rightarrow \mathbb{R}^p$ as
\begin{equation}
\vm \Phi(\vm p) :=[\vm s(\vm p)\trans,\vm \nu(\vm p)\trans, \vm \kappa(\vm p)\trans]\trans\,,
\end{equation}
which denotes the stacked primal-dual solution of all KKT systems \eqref{eq:localKKT}. Note that $\vm \Phi(\vm p)$ is defined w.r.t. $\vm p$, since the local NLPs implicitly depend on $\vm p\indq$ through the functions $\bar f_i\indq (\cdot)$, $\vm{\bar g}_i\indq (\cdot)$, and $\vm{\bar h}_i\indq (\cdot)$, cf. \eqref{eq:definition_costFunction} to \eqref{eq:gradient_withoutneighboraffine}. In general, the mapping $\vm \Phi(\cdot)$ might neither exist, be expressible in closed form nor be single-valued. However, under certain regularity conditions $\vm \Phi(\cdot)$ is locally single-valued, i.e., $\vm y\indq = \vm \Phi(\vm p\indq)$ with $\vm y\indq = [\vm y_i\indq]_{\agents}$, and is continuously differentiable near $\vm p\inds$. 
These assumptions are specified in the following and can be enforced algorithmically. The first concerns the penalty parameter $\rho_i$ and the regularity of the local NLPs \eqref{eq:local_NLP} w.r.t. $\vm p\inds$.
\begin{assumption} \label{ass:localSOSC}
Let $\rho_i\geq 0$ be such that
\begin{align} \label{eq:localSOSC}
		\nablaxx L_i(\vm x_i\inds, \vm \lambda_i\inds, \vm \mu_i\inds, \Ni{x}\inds) + \rho_i 	\vm I \succ\vm 0\,, \quad \forall  \agents\,.
\end{align}
\end{assumption}

	Assumption \ref{ass:localSOSC} requires that $\rho_i$ must be chosen large enough such that the Hessian of the local Lagrangian functions \eqref{eq:local_NLP_Lagrangian} of the modified NLPs \eqref{eq:local_NLP} evaluated at the central KKT point $\vm p\inds$ are positive definite. If the Hessian of \eqref{eq:local_Lagrangian} is not positive definite, then \eqref{eq:localSOSC} can always be satisfied if $\rho_i>0$ is chosen larger than $|\underline{\lambda}(\nablaxx L_i(\vm x_i\inds, \vm \lambda_i\inds, \vm \mu_i\inds, \Ni{x}\inds))|$. Another assumption regards the graph-induced decomposition of the constraints which in this paper is assumed to be given. 
\begin{assumption}\label{ass:localLICQ}
We assume a constraint compatible decomposition (CCD) of the graph-structured NLP~\eqref{eq:central_NLP}, i.e., the matrix $[\nablax\trans \vm g_i(\vm x_i\inds,\vm x_{\mathcal{N}_i}\inds), [\nablax \vm h_i(\vm x_i\inds,\vm x_{\mathcal{N}_i}\inds)]_{\mathcal{A}_i\inds(\vm 0)}\trans]$ has full column rank for each agent $\agents$.
	\end{assumption}

	Assumption~\ref{ass:localLICQ} is a structural property of the graph-induced partitioning. It requires that the assignment of constraints to agents preserves linear independence of the active central constraint gradients. Hence, together with ii) in Assumption~\ref{ass:constraint_regularity}, it constitutes a decomposition-compatible strengthening of the centralized LICQ.
\begin{remark}
If Assumption \ref{ass:localLICQ} is violated, additional slack variables $\sigma_i \in \mathbb{R}^{n_{gi} + n_{hi}}$ can be introduced for each constraint in \eqref{eq:definition_constraints}, with positivity constraints for slack variables associated with inequality constraints. Then, Assumption \ref{ass:localLICQ} holds by construction since the local constraint Jacobians contain identity blocks. To ensure equivalence with NLP \eqref{eq:central_NLP}, the slack variables are penalized in \eqref{eq:definition_costFunction} as $\bar f_i(\vm s_i) + \rho^\sigma_i \|\vm \sigma_i\|_1 $. Under Assumption \ref{ass:constraint_regularity}, the slack vanishes at $\vm p\inds$ if $\rho_i^\sigma\geq \max\{\| \vm \lambda_i\inds\|_\infty, \|\vm \mu_i\inds\|_{\infty}\}$ \cite[Thm. 17.3]{Nocedal}. The non-smooth penalty can be transformed into a smooth formulation without affecting separability of the resulting local NLPs (cf. \cite[Sec. 17.2]{Nocedal}).
\end{remark}
For analysis purposes, we further consider the stacked local updates \eqref{eq:primal_dual_update} from a central viewpoint and define the operator $\vm T : \mathbb{R}^p \rightarrow \mathbb{R}^p$ associated with the SBDP+ recursion
\begin{align} \label{eq:SBDP_plus_operator}
		\vm T(\vm p) := \vm p + \alpha \vm P(\vm \Phi(\vm p), \vm p)(\vm \Phi(\vm p) - \vm d(\vm p))\,,
	 \end{align}
where the function $\vm P: \mathbb{R}^p \times \mathbb{R}^p \rightarrow \mathbb{R}^{p \times p}$ is given as
		\begin{equation} 
			\vm P(\vm y, \vm p\indq) \!=\! \begin{bmatrix}
		\nabla_{\vm s \vm s}^2 \!\sum_{\agents}\!\bar{L}_i^q(\vm y_i) 
			\!\!&\!\! \nabla_{\vm s}\trans\bar{\vm g}^q(\vm s) 
			\!\!&\!\! \nabla_{\vm s}\trans\bar{\vm h}^q(\vm s) \\
		-\beta \, \nabla_{\vm s} \bar{\vm g}^q(\vm s) 
			& \vm 0 & \vm 0 \\
		-\beta \, \vm K \, \nabla_{\vm s} \bar{\vm h}^q(\vm s) 
			& \vm 0 & -\beta \, \bar{\vm H}^q(\vm s)
\end{bmatrix}
		\end{equation}
with $\bar{\vm g}^q(\vm s) =[\vm {\bar g}_i\indq(\vm s_i)]_{\agents}$, $\bar{\vm h}^q(\vm s) =[\vm {\bar h}_i\indq(\vm s_i)]_{\agents}$, and block-diagonal matrices  $\vm {\bar H}(s) = \blkdiag(\vm {\bar H}_1\indq(\vm s_1),\dots, \vm {\bar H}_M\indq(\vm s_M))$ and $\vm K = \blkdiag(\vm K_1,\dots,\vm K_M)$. We abbreviate $\vm P\indq(\vm y) := \vm P(\vm y, \vm p\indq)$. The stacked offset vector is $\vm d(\vm p)= [\vm 0\trans, \vm \lambda\trans, \vm \mu\trans]\trans$.   Thus, Algorithm \ref{alg:SENSI_search_direction} can be written as fixed-point recursion
	\begin{align}\label{eq:primal_dual_update_stacked} 
			\vm p\indqn = \vm T(\vm p\indq)\,, \quad q = 0,1,\dots
	\end{align}                  
which is in contrast to SBDP, where $\vm T( \vm p) = \vm p + (\vm \Phi(\vm p) - \vm d(\vm p))$ and reveals the modification of the algorithmic structure by an iteration-dependent, nonlinear, primal-dual preconditioner.  	
\subsection{Local convergence}
The subsequent convergence analysis of Algorithm \ref{alg:SENSI_search_direction} will revolve around showing that the sequence $\{\vm p\indq\}$ generated by \eqref{eq:primal_dual_update_stacked} converges towards a (local) central solution $\vm p\inds$ of the KKT system \eqref{eq:centralKKT}. To associate the limit of this sequence with the KKT point of~\eqref{eq:central_NLP}, it is necessary to show that $\vm y\indq = \vm d(\vm p\indq) = [\vm 0\trans, {\vm \lambda\indq}\trans, {\vm \mu\indq}\trans]\trans$ implies that $\vm p\indq$ is also a KKT point of \eqref{eq:central_NLP}. This relation is established in the next lemma.
	\begin{lemma}\label{lem:optimality}
	Let Assumptions \ref{ass:constraint_regularity} to \ref{ass:localLICQ} hold. If $ \vm y\indq= \vm d(\vm p\indq)$, then $\vm p\indq$ is a locally unique KKT point of NLP \eqref{eq:central_NLP}. Conversely, if $\vm p \indq$ is a KKT point $\vm p\inds$, then $\vm \Phi(\vm p\inds) = \vm d(\vm p\inds)$ is a locally unique solution of \eqref{eq:localKKT} and $\vm p\inds = \vm T(\vm p\inds)$ holds locally. 
	\end{lemma}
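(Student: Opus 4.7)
The plan is to prove both directions by direct substitution into the local KKT systems \eqref{eq:localKKT} and then to invoke standard strong-regularity/SOSC arguments for the uniqueness claims. The key observation is that $\vm y\indq = \vm d(\vm p\indq)$ is equivalent to $\vm s_i = \vm 0$, $\vm \nu_i = \vm \lambda_i\indq$, and $\vm \kappa_i = \vm \mu_i\indq$ for every $\agents$, which lets each local KKT block collapse, term by term, into the corresponding block of the central KKT system \eqref{eq:centralKKT} (and conversely).

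For the forward direction I would first expand $\nabla_{\vm s_i}\bar f_i\indq(\vm 0)$ from \eqref{eq:definition_costFunction}: the penalty contribution $\rho \vm s_i$ vanishes at $\vm s_i = \vm 0$, while the sensitivity sum contributes $\sum_{\neighs}\nablax L_j\indq$. Substituting this together with $\nabla_{\vm s_i}\vm{\bar g}_i\indq(\vm 0) = \nablax \vm g_i$, $\nabla_{\vm s_i}\vm{\bar h}_i\indq(\vm 0) = \nablax \vm h_i$, $\vm \nu_i = \vm \lambda_i\indq$, and $\vm \kappa_i = \vm \mu_i\indq$ into \eqref{eq:localKKT_gradient} reproduces exactly $\nablax L_i + \sum_{\neighs}\nablax L_j = \nablax L(\vm x,\vm \lambda,\vm \mu) = \vm 0$, which is \eqref{eq:centralKKT_gradient}. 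Primal feasibility, complementarity, and dual non-negativity transfer verbatim because $\vm{\bar g}_i\indq(\vm 0) = \vm g_i$, $\vm{\bar h}_i\indq(\vm 0) = \vm h_i$, and $\vm K_i = \vm U_i$. The converse direction follows by reversing these manipulations: setting $\vm y_i = (\vm 0, \vm \lambda_i\inds, \vm \mu_i\inds)$ one verifies each clause of \eqref{eq:localKKT} from the corresponding clause of \eqref{eq:centralKKT}.

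Local uniqueness in the first direction is classical: Assumption \ref{ass:strong_regularity} (LICQ, SC) combined with Assumption \ref{ass:uniform_SOSC} makes $\vm p\inds$ a strongly regular KKT point of~\eqref{eq:central_NLP} in Robinson's sense, hence isolated in the primal-dual sense. For the converse, local uniqueness of $\vm \Phi(\vm p\inds) = \vm d(\vm p\inds)$ follows analogously at the agent level, using Assumption \ref{ass:localLICQ} (LICQ for each local NLP at $\vm s_i = \vm 0$), Assumption \ref{ass:localSOSC} (which upgrades $\nablaxx L_i + \rho \vm I$, evaluated at $\vm p\inds$, to positive definiteness and thereby provides a uniform local SOSC for \eqref{eq:local_NLP}), and strict complementarity inherited from Assumption \ref{ass:strong_regularity}(i) — the latter holds because $\vm{\bar h}_i\indq(\vm 0) = \vm h_i(\vm x_i\inds, \Ni{x}\inds)$ and $\vm \kappa_i = \vm \mu_i\inds$ force the local active set at $\vm s_i = \vm 0$ to coincide with the $i$-th block of $\mathcal{A}(\vm x\inds)$.

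The main obstacle I anticipate is not the algebra itself but the bookkeeping behind the decomposition $\nablax L = \nablax L_i + \sum_{\neighs}\nablax L_j$: it relies on the fact that $L_k$ depends on $\vm x_i$ only when $k = i$ or $k \in \mathcal{N}_i$, which in turn uses the symmetry of the undirected edge set $\mathcal{E}$. Apart from that, the argument is a careful matching of KKT clauses and evaluation points, and no heavier machinery than the standard implicit-function/strong-regularity result for NLPs is required.
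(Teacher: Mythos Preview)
Your proposal is correct and follows essentially the same approach as the paper: both argue by direct substitution that $\vm y\indq=\vm d(\vm p\indq)$ collapses the local KKT systems \eqref{eq:localKKT} into the central ones \eqref{eq:centralKKT} (and conversely), and both obtain local uniqueness from SC, LICQ, and the second-order conditions at the central and local levels. The paper cites Fiacco's sensitivity theorem for uniqueness where you invoke Robinson-type strong regularity, but this is only a cosmetic difference; your explicit verification of the gradient decomposition $\nablax L = \nablax L_i + \sum_{\neighs}\nablax L_j$ is in fact more detailed than the paper's appeal to ``structural equivalence.''
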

	\begin{proof}
	See Appendix \ref{app:1}.
	\end{proof}

Lemma \ref{lem:optimality} is important from two perspectives. First, it establishes the optimality of the distributed solution $\vm p^{q}$ when the algorithm comes to a halt, that is when $\vm y\indq - \vm d(\vm p\indq) = \vm 0$ in \eqref{eq:SBDP_plus_operator}. In other words, the algorithm will make progress until $\vm p\indq$ satisfies the central optimality conditions. Second, it shows that $\vm \Phi(\vm p\inds) = \vm d(\vm p\inds)= [\vm 0\trans, {\vm \lambda\inds}\trans, {\vm \kappa\inds}\trans ]\trans$ is a local solution of the NLPs \eqref{eq:local_NLP} such that we can employ the basic sensitivity theorem \cite[Thm. 3.2.2]{Fiacco} to analyze the behavior of $\vm \Phi(\vm p)$ for $\vm p$ sufficiently close to $\vm p\inds$  which is adapted for the present case and summarized in Lemma  \ref{lem:solvability}. Furthermore, let $\Gamma_i^{\mathcal{A}/\mathcal{I}}(\cdot)$ be set-valued mappings that assign the elements of $\mathcal{A}_i^q(\vm s_i)$ or $\mathcal{I}_i^q(\vm s_i)$  their global index in $\mathbb{N}_{[1,n_{h}]} $, respectively.
\begin{lemma}\label{lem:solvability}
	Under Assumptions \ref{ass:constraint_regularity} to \ref{ass:localLICQ}, there exists a constant $r_1>0$ such that for $\vm p \in \mathcal{B}_{r_1}(\vm p\inds)$ it holds that
	\begin{enumerate}[i)]
		\item the mapping $\vm \Phi(\cdot)$ exists, is locally unique, and twice continuously differentiable with $\vm \Phi(\vm p)$ satisfying the local KKT conditions \eqref{eq:localKKT} for any $\vm p\indq = \vm p$,
		\item at any $\vm \Phi(\vm p)$ the sets $\mathcal{A}(\vm x\inds)$ and $\cup_{\agents}\Gamma_i^{\mathcal{A}}(\mathcal{A}_i(\vm s_i))$ of active inequalities of the central NLP \eqref{eq:central_NLP} and local NLPs~\eqref{eq:local_NLP} are equivalent, the LICQ for each NLP \eqref{eq:local_NLP} holds, SC is preserved, and we have $\bar{L}_i(\vm y) \succ \vm 0$.
	\end{enumerate}
\end{lemma}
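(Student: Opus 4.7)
The plan is to invoke the basic sensitivity theorem of Fiacco \cite[Thm.~3.2.2]{Fiacco} on each local NLP~\eqref{eq:local_NLP} separately, treating the iterate $\vm p$ as a perturbation parameter around the reference value $\vm p\inds$. Lemma~\ref{lem:optimality} already supplies the base solution at $\vm p = \vm p\inds$: for every $\agents$, the vector $\vm y_i\inds = [\vm 0\trans,\,{\vm \lambda_i\inds}\trans,\,{\vm \mu_i\inds}\trans]\trans$ is a KKT point of the corresponding local problem, with the local multipliers coinciding with the central ones. Since every function in~\eqref{eq:central_NLP} is three times continuously differentiable, the data of each perturbed local NLP is $C^2$ jointly in $(\vm s_i,\vm p)$, which supplies the smoothness required by the theorem.

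Next I would verify its three structural hypotheses at $\vm y_i\inds$ for every $\agents$. First, LICQ for~\eqref{eq:local_NLP} is immediate from Assumption~\ref{ass:localLICQ}, since $\nabla_{\vm s_i}\vm{\bar g}_i\indq(\vm 0) = \nabla_{\vm x_i}\vm g_i(\vm x_i\inds,\Ni{x}\inds)$ and analogously for $\vm{\bar h}_i\indq$. Second, strict complementarity at the local level is inherited from the central one: Lemma~\ref{lem:optimality} gives $\vm \kappa_i\inds = \vm \mu_i\inds$, and the local active set at $\vm s_i = \vm 0$ consists precisely of those constraints owned by agent $i$ that are active in~\eqref{eq:central_NLP}, so Assumption~\ref{ass:strong_regularity}(i) transfers. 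Third, the local Hessian evaluated at $\vm y_i\inds$ reduces to $\nablaxx L_i(\vm x_i\inds,\vm \lambda_i\inds,\vm \mu_i\inds,\Ni{x}\inds) + \rho \vm I$, which is positive definite on the whole space by Assumption~\ref{ass:localSOSC} and is therefore strictly stronger than the second-order sufficient condition on the critical cone that Fiacco requires.

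With these hypotheses in place, Fiacco's theorem yields, for each $\agents$, a radius $r_{1,i}>0$ on which the local KKT system is uniquely solved by a twice continuously differentiable mapping, and on which LICQ, SC, the active set, and positive definiteness of the local Lagrangian Hessian are preserved. Choosing $r_1 := \min_{\agents} r_{1,i}$ produces a single neighborhood $\mathcal B_{r_1}(\vm p\inds)$ on which all conclusions of part i) and the LICQ, SC, and Hessian statements of part ii) hold simultaneously. The remaining identity $\cup_{\agents}\Gamma_i^{\mathcal{A}}(\mathcal A_i(\vm s_i)) = \mathcal A(\vm x\inds)$ follows from the fact that the partitioning in~\eqref{eq:central_NLP} assigns each central inequality to exactly one agent, so at $\vm p = \vm p\inds$ the disjoint union of locally active global indices reconstructs $\mathcal A(\vm x\inds)$; active-set preservation from Fiacco then propagates this identification throughout $\mathcal B_{r_1}(\vm p\inds)$.

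The main obstacle I anticipate is precisely this last bookkeeping step: ensuring that the per-agent neighborhoods can be intersected without any $\Gamma_i^{\mathcal A}$ gaining or losing an index, and that the same reference-point active set continues to characterize both central and local problems for all $\vm p \in \mathcal B_{r_1}(\vm p\inds)$. This is where strict complementarity at $\vm p\inds$ is essential, since it provides the slack needed to rule out borderline-active constraints that could otherwise toggle under arbitrarily small perturbations and break the clean correspondence between the local active sets and~$\mathcal A(\vm x\inds)$.
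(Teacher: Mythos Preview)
Your proposal is correct and takes essentially the same approach as the paper: both verify LICQ, strict complementarity, and the second-order sufficient condition for each local NLP~\eqref{eq:local_NLP} at $\vm p\inds$ using Assumptions~\ref{ass:strong_regularity}(i), \ref{ass:localSOSC}, and~\ref{ass:localLICQ}, and then invoke Fiacco's basic sensitivity theorem \cite[Thm.~3.2.2]{Fiacco} on each local problem. The only minor addition in the paper is an explicit reference to \cite[Cor.~3.2.5]{Fiacco} for the twice continuous differentiability of $\vm \Phi$; otherwise your write-up simply makes explicit the per-agent radius and active-set bookkeeping that the paper leaves implicit.
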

\begin{proof}
	See Appendix \ref{app:2}.
\end{proof}

In essence, Lemma \ref{lem:solvability} ensures that the mapping $\vm \Phi(\cdot)$ is (locally) differentiable on $\mathcal{B}_{r_1}(\vm p\inds)$ and that the regularity assumptions regarding the central KKT point $\vm p\inds$ carry over to the KKT point $\vm \Phi(\vm p\indq)$ of the modified NLPs \eqref{eq:local_NLP} such that their solvability is ensured in iteration $q$, provided $\vm p\indq \in \mathcal{B}_{r_1}(\vm p\inds)$. Based on the differentiability of $\vm \Phi(\cdot)$, we can derive a first-order approximation of the error between current iterate $\vm p\indq$ and optimal solution $\vm p\inds$ of Algorithm \ref{alg:SENSI_search_direction}.
\begin{lemma}\label{lem:approximation}
	Suppose Assumptions  \ref{ass:constraint_regularity} to \ref{ass:localLICQ} hold. Then, a first-order approximation of the error $\Delta \vm p\indq := \vm p\indq - \vm p\inds$ of Algorithm~\ref{alg:SENSI_search_direction} in $ \mathcal{B}_{r_1}(\vm p\inds)$ at iteration $q$ is given by 
	\begin{align} \label{eq:linear_approx}
	\Delta \vm p\indqn = (\vm I - \alpha \vm A(\vm p\inds)) \Delta \vm p\indq + \vm r(\|\Delta \vm p\indq\|^2)\,,
	\end{align}
with $\vm I - \alpha \vm A(\vm p\inds)=\nabla \vm T(\vm p\inds)$. Hereby, $\vm r(\cdot) \in \mathcal{O}(\|\Delta \vm p\indq\|^2)$ with $\vm r(\vm 0) = \vm 0$ summarizes the higher-order terms and the matrix-valued function $\vm A: \mathbb{R}^p \rightarrow \mathbb{R}^{p\times p}$ is defined as
\begin{align}\label{eq:Dynamic_matrix}
	\vm A(\vm p) = \begin{bmatrix}\!
		\nabla_{\vm x \vm x}^2 L(\vm x, \vm \lambda,\vm \mu ) & \vm J_g(\vm x)\trans & \vm J_h(\vm x)\trans \\
		-\beta  \vm J_g(\vm x) & \vm 0 & \vm 0\\
		-\beta  \vm U\vm J_h(\vm x) & \vm 0 & -\beta\vm H(\vm x)
	\end{bmatrix}
\end{align}
with matrices $\vm J_g(\vm x)= \nabla_{\vm x} \vm g(\vm x)$, $\vm J_h(\vm x)= \nabla_{\vm x} \vm h(\vm x)$, $\vm H(\vm x)= \diag(\vm h(\vm x))$, and $\vm U = \blkdiag(\vm U_1,\dots,\vm U_M)$.
\end{lemma}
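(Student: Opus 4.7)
The plan is to linearize the stacked update \eqref{eq:primal_dual_update_stacked} around $\vm p\inds$ and identify the resulting Jacobian with $\vm I - \alpha \vm A(\vm p\inds)$. By Lemma \ref{lem:optimality} the residual $\vm \Phi(\vm p) - \vm d(\vm p)$ vanishes at $\vm p\inds$, so $\vm p\inds$ is a fixed point of the update. By Lemma \ref{lem:solvability} the mapping $\vm \Phi(\cdot)$ is twice continuously differentiable on $\mathcal{B}_{r_1}(\vm p\inds)$, and the bar-functions composing $\vm P\indq(\cdot)$ inherit this regularity through composition. A first-order Taylor expansion then yields
\begin{equation*}
    \Delta \vm p\indqn = \Delta \vm p\indq + \alpha \vm P(\vm \Phi(\vm p\inds))\bigl[\nabla_{\vm p}\vm \Phi(\vm p\inds) - \nabla_{\vm p}\vm d(\vm p\inds)\bigr]\Delta \vm p\indq + \vm r(\|\Delta \vm p\indq\|^2),
\end{equation*}
where the derivative of $\vm P$ hits the vanishing residual and hence contributes nothing to the linear term. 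The claim thus reduces to showing that $\vm P(\vm \Phi(\vm p\inds))[\nabla_{\vm p}\vm \Phi(\vm p\inds) - \nabla_{\vm p}\vm d(\vm p\inds)] = -\vm A(\vm p\inds)$.

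Next I would compute $\nabla_{\vm p}\vm \Phi$ via the implicit function theorem applied to the stacked local KKT system $\mathcal{F}(\vm y,\vm p) = \vm 0$ obtained from \eqref{eq:localKKT}. Because each $F_i$ depends only on its own $\vm y_i$, the Jacobian $\vm M := \partial\mathcal{F}/\partial\vm y$ is block-diagonal with blocks $\vm M_i$, each of which is invertible thanks to the LICQ, strict complementarity, and positive definiteness guaranteed by Lemma \ref{lem:solvability}. Hence $\nabla_{\vm p}\vm \Phi = -\vm M^{-1}\,\partial\mathcal{F}/\partial\vm p$. The key algebraic observation, obtained by comparing \eqref{eq:MixingMatrix} with the blocks of $\vm M_i$, is that at $\vm \Phi(\vm p\inds)$ one has $\vm P_i = \vm D_i\,\vm M_i$ with row-scaling $\vm D_i := \blkdiag(\vm I, -\beta\vm I, -\beta\vm I)$. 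Consequently $\vm P\vm M^{-1} = \vm D := \blkdiag(\vm D_1,\dots,\vm D_M)$, and the expression to verify collapses to $-\vm D\,\partial\mathcal{F}/\partial\vm p - \vm P\,\nabla_{\vm p}\vm d = -\vm A(\vm p\inds)$. This is precisely the purpose of the carefully designed update matrix $\vm P\indq(\cdot)$: it eliminates the KKT-matrix inverse and leaves a clean, row-scaled sensitivity.

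The remaining step is a block-by-block identification of $\vm D\,\partial\mathcal{F}/\partial\vm p + \vm P\,\nabla_{\vm p}\vm d$ with $\vm A(\vm p\inds)$. The main obstacle is the $(\vm x,\vm x)$ block: exploiting that at $\vm \Phi(\vm p\inds)$ the local residual $F_i^{(1)}$ evaluated with $\vm s_i = \vm 0$, $\vm \nu_i = \vm \lambda_i\inds$, $\vm \kappa_i = \vm \mu_i\inds$ equals the central gradient $\nabla_{\vm x_i}L(\vm p)$, one obtains $\partial F_i^{(1)}/\partial\vm x = \nabla_{\vm x_i\vm x}^2 L$; the off-diagonal couplings that the block-diagonal $\vm P$ cannot provide on its own are supplied exactly by the sensitivity term $\sum_{\neighs}\nablax L_j$ in the modified cost \eqref{eq:definition_costFunction}. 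The $(\vm x,\vm \lambda)$ and $(\vm x,\vm \mu)$ blocks then reproduce $\vm J_g\trans$ and $\vm J_h\trans$ by combining the contribution from $\vm P\,\nabla_{\vm p}\vm d$ (the agent's own constraint Jacobians, selected through the identity blocks of $\nabla_{\vm p}\vm d$) with the implicit-dependence contribution from $\partial\mathcal{F}/\partial\vm p$ (neighbor constraints carried through the sensitivity term). Finally, the lower block rows $-\beta\vm J_g$, $-\beta\vm U\vm J_h$, and $-\beta\vm H$ follow immediately from $\vm D$-scaling the Jacobians $\nabla_{\vm s}\vm{\bar g}$, $\vm K\nabla_{\vm s}\vm{\bar h}$, and $\vm{\bar H}$ at $\vm s = \vm 0$, noting that $\vm K = \vm U$ and $\vm{\bar H} = \vm H$ at $\vm \Phi(\vm p\inds)$.
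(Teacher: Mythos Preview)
Your proposal is correct and follows essentially the same route as the paper: linearize the stacked update \eqref{eq:primal_dual_update_stacked} at $\vm p\inds$, use Lemma~\ref{lem:optimality} to kill the derivative-of-$\vm P$ term, compute $\nabla_{\vm p}\vm \Phi$ by the implicit function theorem applied to the stacked KKT system, and then exploit the factorization $\vm P(\vm \Phi(\vm p\inds)) = \blkdiag(\vm I,-\beta\vm I,-\beta\vm I)\,\vm M(\vm p\inds)$ to cancel $\vm M^{-1}$. The only cosmetic difference is that the paper packages your $\partial\mathcal{F}/\partial\vm p$ as $\vm N(\vm p\inds)-\vm M(\vm p\inds)\vm D$ with $\vm D=\nabla_{\vm p}\vm d$, so your identity $-\vm D_{\mathrm{scal}}\,\partial\mathcal{F}/\partial\vm p - \vm P\,\nabla_{\vm p}\vm d$ collapses directly to $-\vm D_{\mathrm{scal}}\vm N(\vm p\inds)=-\vm A(\vm p\inds)$ without the block-by-block bookkeeping you outline in your last paragraph.
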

\begin{proof}
	See Appendix \ref{app:3}. 
\end{proof}
Although the matrix $\vm A(\vm p\inds)$ depends on the optimal solution and is not known beforehand, it can be used to analyze the local, asymptotic stability of the nonlinear iteration \eqref{eq:linear_approx} in the vicinity of $\vm p\inds$. This fact is used to establish local convergence of Algorithm~\ref{alg:SENSI_search_direction} as stated in the next theorem.

\begin{theorem} \label{th:conv}
	Let Assumptions \ref{ass:constraint_regularity} to \ref{ass:localLICQ} hold. Then, there exists an upper bound on the step size $\bar{\alpha}>0$ and a radius~$r>0$ such that if $\alpha < \bar{\alpha}$ and $\vm p^{0} \in \mathcal{B}_r(\vm p\inds)$ the sequence $\{\vm p\indq\}$ generated by Algorithm~\ref{alg:SENSI_search_direction} is bounded and converges asymptotically to a central KKT point $\vm p\inds$ of NLP~\eqref{eq:central_NLP}. Moreover, for any $\vm{Q}\succ \vm 0$ the discrete-time Lyapunov equation
	 \begin{equation}\label{eq:Lypunov_equation_theorem}
	(\vm I - \alpha \vm{ A})\trans \vm{\bar P} (\vm I - \alpha \vm{ A}) - \vm{\bar P} = - \vm {Q}
	\end{equation} 
	admits a positive-definite solution $\vm{\bar P}\succ \vm 0$ such that Algorithm~\ref{alg:SENSI_search_direction} converges Q-linearly w.r.t. the norm $\|\cdot\|_{\vm{\bar P}}$, i.e., for $q=1,2, \dots$ and $ C = \|\vm I - \alpha \vm A(\vm p\inds)\|_{\vm{\bar P}} \in (0,1)$ it holds that
	\begin{equation} \label{eq:linear_convergence_in_P_norm}
		\| \vm p\indq - \vm p\inds \|_{\vm{\bar P}} \leq C\|\vm p\indqp - \vm p\inds\|_{\vm{\bar P}}\,.
	\end{equation}
	Equivalently, the iterates converge R-linearly w.r.t.\ the norm $\|\cdot\|$, i.e., there exist constants $C_0>0$, $C_1\in(0,1)$ such that 
	\begin{align} \label{eq:R_linear_convergence}
	\| \vm p\indq - \vm p\inds \| \leq C_0C_1^q\|\vm p^0 - \vm p\inds\|\,.
	\end{align} 
\end{theorem}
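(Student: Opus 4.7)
The plan is to use the linearization in Lemma \ref{lem:approximation} together with a discrete-time Lyapunov argument, so the technical heart of the proof reduces to showing that $\vm A(\vm p^*)$ is a positive-stable matrix, i.e.\ that every eigenvalue has strictly positive real part. Once this is established, choosing $\alpha$ below an explicit bound $\bar\alpha$ makes $\vm I-\alpha \vm A(\vm p^*)$ Schur stable, and the rest follows from standard arguments.

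Step 1 (spectrum of $\vm A(\vm p^*)$). I would first exploit strict complementarity to split the analysis according to active and inactive constraints. On the inactive block, the rows $-\beta \vm U \vm J_h$ vanish (since $\mu_k^*=0$) and the diagonal entries of $-\beta \vm H$ equal $-\beta h_k(\vm x^*)>0$, so this block contributes only real, positive eigenvalues. The interesting block is the active-set reduction
\[
\vm A_{\text{red}}=\begin{bmatrix} \nabla_{xx}^2 L & \vm J_g^\top & \vm J_{h,\mathcal A}^\top \\ -\beta\vm J_g & \vm 0 & \vm 0 \\ -\beta\vm U_{\mathcal A}\vm J_{h,\mathcal A} & \vm 0 & \vm 0\end{bmatrix}.
\]
I would similarity-transform by $\blkdiag(\vm I,\vm I,\vm U_{\mathcal A}^{-1/2})$ (legal because SC gives $\vm U_{\mathcal A}\succ 0$) to symmetrize the off-diagonal coupling to $\pm\vm{\bar J}^\top,\vm{\bar J}$ with $\vm{\bar J}=[\vm J_g;\vm U_{\mathcal A}^{1/2}\vm J_{h,\mathcal A}]$. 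For an eigenpair $(\zeta,[\vm u;\vm z])$, LICQ rules out $\vm u=\vm 0$ and also rules out $\zeta=0$ (using $\nabla_{xx}^2 L\succ \vm 0$). For $\vm u\neq\vm 0$, eliminating $\vm z=-\beta\vm{\bar J}\vm u/\zeta$ yields the scalar identity
\[
\zeta^2 - a\,\zeta + \beta\, b = 0,\qquad a=\tfrac{\vm u^*\nabla_{xx}^2L\,\vm u}{\|\vm u\|^2}>0,\; b=\tfrac{\|\vm{\bar J}\vm u\|^2}{\|\vm u\|^2}\ge 0,
\]
so both roots have real part $a/2\ge \underline\lambda(\nabla_{xx}^2L)/2>0$. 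This shows $\mathrm{Re}(\spec(\vm A(\vm p^*)))>0$.

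Step 2 (Schur stability and Lyapunov equation). Positive stability of $\vm A(\vm p^*)$ means $|1-\alpha\zeta|^2 = 1-2\alpha\,\mathrm{Re}(\zeta)+\alpha^2|\zeta|^2<1$ for all $\zeta\in\spec(\vm A(\vm p^*))$ whenever $0<\alpha<\bar\alpha:=\min_\zeta 2\,\mathrm{Re}(\zeta)/|\zeta|^2$. Hence $\vm I-\alpha\vm A(\vm p^*)$ is Schur, and the standard discrete-time Lyapunov theorem guarantees that for any $\vm Q\succ\vm 0$ the equation \eqref{eq:Lypunov_equation_theorem} admits a unique $\vm{\bar P}\succ\vm 0$. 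Defining $\|\cdot\|_{\vm{\bar P}}$ via this $\vm{\bar P}$, one obtains $\|\vm I-\alpha\vm A(\vm p^*)\|_{\vm{\bar P}}=:C\in(0,1)$.

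Step 3 (nonlinear absorption and R-linear rate). Using Lemma \ref{lem:approximation}, write
\[
\|\Delta\vm p^{q+1}\|_{\vm{\bar P}} \le C\,\|\Delta\vm p^{q}\|_{\vm{\bar P}} + \|\vm r(\|\Delta\vm p^q\|^2)\|_{\vm{\bar P}}.
\]
Since $\vm r(\vm 0)=\vm 0$ and $\vm r(\cdot)\in\mathcal O(\|\Delta \vm p^q\|^2)$, I can pick $r\le r_1$ small enough that inside $\mathcal B_r(\vm p^*)$ the second term is bounded by $\tfrac{1-C}{2}\|\Delta\vm p^q\|_{\vm{\bar P}}$, using equivalence of $\|\cdot\|$ and $\|\cdot\|_{\vm{\bar P}}$. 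Then $\|\Delta\vm p^{q+1}\|_{\vm{\bar P}}\le\tfrac{1+C}{2}\|\Delta\vm p^q\|_{\vm{\bar P}}$, which proves forward-invariance of the ball by induction, convergence to $\vm p^*$, and the Q-linear bound \eqref{eq:linear_convergence_in_P_norm}. The R-linear bound \eqref{eq:R_linear_convergence} follows at once from $c_1\|\cdot\|\le\|\cdot\|_{\vm{\bar P}}\le c_2\|\cdot\|$ with $C_0=c_2/c_1$, $C_1=C$ (or $\tfrac{1+C}{2}$).

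The main obstacle is Step 1: the matrix $\vm A(\vm p^*)$ is neither symmetric nor definite in any obvious sense, and the asymmetric weights $\beta$, $\vm U$, and $\vm H$ obstruct a direct quadratic-form argument. The similarity transform by $\vm U_{\mathcal A}^{-1/2}$, which crucially uses strict complementarity to be well defined, is what converts the active-set block into a standard saddle-point form amenable to the scalar characteristic-polynomial argument above. Uniform SOSC (rather than the reduced SOSC) is exactly what is needed to guarantee $a>0$ without having to project onto the critical cone, which would otherwise conflict with the presence of the $\vm v,\vm z$ components in the eigenvector.
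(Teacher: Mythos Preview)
Your proposal is correct and follows essentially the same route as the paper: an active/inactive split of $\vm A(\vm p^*)$, reduction of the active block to a quadratic eigenvalue problem to establish $\mathrm{Re}(\zeta)>0$, then Schur stability and a discrete-time Lyapunov argument with nonlinear-remainder absorption. The only notable variations are cosmetic---you symmetrize via the $\vm U_{\mathcal A}^{1/2}$ similarity and argue the scalar quadratic explicitly, whereas the paper keeps $\vm{\bar U}^*$ in the quadratic pencil and cites a reference; also, your claim that ``both roots have real part $a/2$'' is only literally true in the complex-discriminant case (in the real case one root can be smaller), but positivity still follows since you separately excluded $\zeta=0$.
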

\begin{proof}
	See Appendix \ref{app:4}. 
\end{proof}

Theorem \ref{th:conv} provides linear convergence of SBDP+, provided that the step size $\alpha$ in \eqref{eq:primal_dual_update} is sufficiently small and that the penalty $\rho_i$ in \eqref{eq:definition_costFunction} is chosen sufficiently large, see Assumption~\ref{ass:localSOSC}. Furthermore, it should be noted that convergence is ensured for any $\beta>0$ by an appropriate choice of the step size $\alpha$. 

In light of existing convergence results (cf. \cite[Thm. 5]{Scheu}, \cite[Thm. 1]{Schneider2}, \cite[Prop. 5]{Schneider3}, and \cite[Thm. 1]{Pierer}), the SBDP+ method proposed in Algorithm~\ref{alg:SENSI_search_direction} exhibits local convergence for a significantly broader class of NLPs, not just those satisfying the GDDC for NLP \eqref{eq:central_NLP}, i.e.,
$
	\|\vm I - \vm M(\vm p\inds)^{-1}\vm N(\vm p\inds)\| <1 
$
with $\vm M(\vm p\inds)$ and $\vm N(\vm p\inds)$ as defined in \eqref{eq:M} and \eqref{eq:N} in Appendix~\ref{app:3}. However, if the GDDC does hold, then SBDP alone, i.e., update \eqref{eq:SBDP_update}, guarantees linear (under certain conditions even quadratic) convergence w.r.t. the norm~${\|\cdot\|}$ \cite[Thm. 1]{Pierer}. Thus, from a practical perspective one can try SBDP first, and resort to SBDP+ if SBDP is ill-conditioned or fails to converge.
An interesting algorithmic simplification arises in the case of a central NLP \eqref{eq:central_NLP} that is not coupled via the constraints which is formalized in the following corollary.
 \begin{corollary} \label{cor:decoupled_constraints}
 Suppose that all the requirements of Theorem \ref{th:conv} hold and that the constraints are decoupled, i.e., $\vm g_i(\vm x_i, \Ni{x}) = \vm {g}_{ii}(\vm x_i)$ and $\vm h_i(\vm x_i, \Ni{x}) = \vm {h}_{ii}(\vm x_i)$, for all $\agents$. Then, we can choose $\vm P_i\indq(\vm y_i)=\vm I$, $\forall \vm y_i \in \mathbb{R}^{p_i}$ and Algorithm \ref{alg:SENSI_search_direction} retains the convergence properties of Theorem~\ref{th:conv}.
 \end{corollary}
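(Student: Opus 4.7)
The plan is to mirror the linearization argument of Lemma \ref{lem:approximation} and Theorem \ref{th:conv} for the simplified update $\vm p_i\indqn = \vm p_i\indq + \alpha(\vm y_i\indq - \vm d_i(\vm p_i\indq))$ induced by $\vm P_i\indq = \vm I$, and exploit the block-diagonal constraint structure to keep the error recursion contractive.

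First, because $\vm \Phi(\vm p\inds) = \vm d(\vm p\inds)$ by Lemma \ref{lem:optimality}, any first-order variation of $\vm P\indq(\vm \Phi(\vm p\indq))$ at $\vm p\inds$ is annihilated by the factor $\vm \Phi(\vm p\indq) - \vm d(\vm p\indq)$. The derivation used in Lemma \ref{lem:approximation} therefore carries over to the stacked simplified iteration $\vm p\indqn = \vm p\indq + \alpha(\vm \Phi(\vm p\indq) - \vm d(\vm p\indq))$ and yields the linearization
\begin{align*}
\Delta \vm p\indqn = (\vm I - \alpha \tilde{\vm A}(\vm p\inds))\,\Delta \vm p\indq + \vm r(\|\Delta \vm p\indq\|^2),
\end{align*}
with $\tilde{\vm A}(\vm p\inds) = \vm I_d - \nabla_{\vm p}\vm \Phi(\vm p\inds)$ and $\vm I_d := \blkdiag(\vm 0, \vm I, \vm I)$ of the appropriate primal/dual block sizes. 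Matching this against Lemma \ref{lem:approximation} applied to the original update also yields the factorization $\vm A(\vm p\inds) = \vm P\indq(\vm y\inds)\,\tilde{\vm A}(\vm p\inds)$, which relates the two iteration matrices.

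Next, invoke the decoupled constraint structure: the Jacobians $\vm J_g(\vm x\inds)$ and $\vm J_h(\vm x\inds)$ are block-diagonal across agents, so $\vm A(\vm p\inds)$ and $\vm P\indq(\vm y\inds)$ share their entire constraint-coupled block structure and differ only in the primal $(1,1)$ block. Writing $\vm A(\vm p\inds) = \vm P\indq(\vm y\inds) + \vm E$, with $\vm E$ supported only on the primal row and equal to $\nabla_{\vm x\vm x}^2 L(\vm p\inds) - \blkdiag(\nablaxx L_i\inds + \rho \vm I)$ in its top-left block, gives the explicit expression $\tilde{\vm A}(\vm p\inds) = \vm I + \vm P\indq(\vm y\inds)^{-1}\vm E$. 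Assumptions \ref{ass:strong_regularity}, \ref{ass:localSOSC}, and \ref{ass:localLICQ} guarantee that $\vm P\indq(\vm y\inds)$ is invertible and that its primal block is positive definite, so an eigenvalue-perturbation argument applied to $\vm I + \vm P\indq(\vm y\inds)^{-1}\vm E$ places the spectrum of $\tilde{\vm A}(\vm p\inds)$ strictly in the open right half-plane.

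Having established this spectral property, the remaining steps of Theorem \ref{th:conv} go through verbatim with $\tilde{\vm A}$ in place of $\vm A$: for $\alpha$ sufficiently small the matrix $\vm I - \alpha \tilde{\vm A}(\vm p\inds)$ is Schur stable, the Lyapunov equation \eqref{eq:Lypunov_equation_theorem} admits a positive-definite solution $\vm{\bar P}\succ \vm 0$, and absorbing the higher-order remainder on a small ball $\mathcal{B}_r(\vm p\inds)$ yields Q-linear convergence in $\|\cdot\|_{\vm{\bar P}}$ and R-linear convergence in $\|\cdot\|$, exactly as in \eqref{eq:linear_convergence_in_P_norm} and \eqref{eq:R_linear_convergence}. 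The main obstacle is the spectral step: without the compensating primal-dual couplings built into the full $\vm P\indq$ matrix, one has to verify directly that replacing $\vm P\indq$ by the identity still produces a contractive iteration. This succeeds precisely because under decoupled constraints $\vm A$ and $\vm P\indq$ differ only by a bounded objective-Hessian perturbation, whose influence on the spectrum of $\tilde{\vm A}$ is controlled through the penalty $\rho$ via Assumption \ref{ass:localSOSC}.
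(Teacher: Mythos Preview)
Your setup is fine: the linearization with $\vm P\indq=\vm I$ does give $\tilde{\vm A}(\vm p\inds)=\vm D-\nabla_{\vm p}\vm\Phi(\vm p\inds)=\vm M(\vm p\inds)^{-1}\vm N(\vm p\inds)$, and under decoupled constraints it is true that $\vm A(\vm p\inds)$ and $\vm P\indq(\vm y\inds)$ agree off the primal $(1,1)$ block, so $\tilde{\vm A}=\vm I+\vm P\indq(\vm y\inds)^{-1}\vm E$ with $\vm E$ carrying only the Hessian mismatch $\nabla_{\vm x\vm x}^2 L(\vm p\inds)-\vm{\bar L}(\vm p\inds)$. The gap is the ``eigenvalue-perturbation argument'' you invoke to conclude that $\tilde{\vm A}$ has its spectrum in the open right half-plane. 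This matrix is not a small perturbation of the identity: $\vm E$ contains all the cross-agent Hessian blocks $\nabla_{\vm x_i\vm x_j}^2 L$ and carries $-\rho\vm I$ on its diagonal, so $\|\vm P\indq(\vm y\inds)^{-1}\vm E\|$ is not controlled by any of the standing assumptions, and increasing $\rho$ does not shrink it. No generic perturbation bound can place the eigenvalues where you need them, and Assumption~\ref{ass:localSOSC} says nothing about the size of this perturbation---it only fixes the sign of the local Hessians.

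What actually closes the argument in the paper is structure, not smallness. After reordering active versus inactive inequalities, the linear iteration decouples into an inactive part $(1-\alpha)\vm I$ (trivially stable for $\alpha\in(0,1]$) and an active part governed by
\[
\vm{\bar A}=\begin{bmatrix}\vm{\bar L}(\vm p\inds)&\vm J(\vm x\inds)\trans\\ \vm J(\vm x\inds)&\vm 0\end{bmatrix}^{-1}\begin{bmatrix}\vm L(\vm p\inds)&\vm J(\vm x\inds)\trans\\ \vm J(\vm x\inds)&\vm 0\end{bmatrix},
\]
which is a constraint-preconditioned KKT matrix. A result of Keller--Gould--Wathen on such preconditioners shows that $\vm{\bar A}$ has an eigenvalue~$1$ of multiplicity $2(n_g+|\mathcal A(\vm x\inds)|)$ and the remaining eigenvalues solve the reduced generalized problem $\vm Z\trans\vm{\bar L}\vm Z\vm v=\lambda\,\vm Z\trans\vm L\vm Z\vm v$ on the null space of $\vm J$; positive definiteness of both $\vm{\bar L}$ and $\vm L$ (Assumptions~\ref{ass:uniform_SOSC} and~\ref{ass:localSOSC}) then forces these eigenvalues to be real and positive. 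That is the concrete spectral step you are missing; your proposal skips precisely this, and without it the Schur-stability claim---and hence the corollary---remains unproven.
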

\begin{proof}
	See Appendix \ref{app:6}. 
\end{proof}
This simplification is beneficial in two ways: it reduces computational complexity by avoiding the Hessian and Jacobian evaluations in~\eqref{eq:MixingMatrix}, and it shows that the transformation $\vm P_i^q(\vm y_i)$ in~\eqref{eq:primal_dual_update} is only required for coupled constraints.

\subsection{A practical parameter tuning guideline}
\label{subsec:practicalTuning}
Algorithm~\ref{alg:SENSI_search_direction} involves three tuning parameters: the primal step size $\alpha$, the dual step size $\beta$, and the penalty parameter $\rho_i$. Theorem~\ref{th:conv} and its assumptions provide implicit guidelines for selecting them to achieve good local performance. Since these conditions depend on the unknown KKT point $\vm p^\star$, we instead evaluate them at some $\vm p\in\mathcal{B}_r(\vm p^\star)$, e.g., the initial guess $\vm p^0$ or a small set of sampled points. This is justified by continuity of the involved functions on $\mathcal{B}_r(\vm p^\star)$. The resulting tuning procedure is summarized below.
\begin{enumerate}[1)]
	\item Choose the smallest possible penalty $\rho_i\geq 0$ such that $\nablaxx L_i(\vm x_i, \vm \lambda_i, \vm \mu_i, \Ni{x}) + \rho_i \vm I \succ\vm 0$ for all $\agents$.
	\item Perform Step 4 of Algorithm \ref{alg:SENSI_search_direction} and collect all local quantities. Choose the dual step size $\beta>0$ as 
	\begin{equation}  \label{eq:beta_update1}
		\beta(\vm p) = \frac{\underline \lambda(\nabla_{\vm x \vm x}^2 L(\vm {\bar x}, \vm \nu, \vm \kappa))}{\bar \lambda(\vm J(\bar{\vm x})\trans \vm{\bar K}\vm J(\bar{\vm x}))}
	\end{equation}
		with $ \vm {\bar x} = \vm x + \vm s$, $\vm J(\vm x) := [\nabla_{\vm x}\trans \vm g(\vm x), \nabla_{\vm x}\trans \vm h(\vm x)]\trans$, and $\vm {\bar K}=\diag([\mathds{1}\trans, \vm \kappa\trans])$ where $\mathds{1} \in \mathbb{R}^{n_g}$ is a unity vector.
	\item Choose $\alpha\in(0,1)$ such that $\max_i\{|1 - \alpha \lambda_i|\}<1$ for all eigenvalues $\lambda_i$, $i\in \mathbb{N}_{[0,p]}$ of $ \vm A(\vm p)$. 
\end{enumerate}
Although these steps do not offer convergence guarantees, they provide a principled initial estimate for the parameters, upon which further refinement can be based. Importantly, when the central NLP \eqref{eq:central_NLP} is an equality-constrained QP, these guidelines are independent of $\vm p$ and hold globally.

\begin{remark}
	The motivation for the choice of $\beta$ in 2) stems from the quadratic eigenvalue problem \eqref{eq:qaudratic_eigenvalue_problem} and aims at keeping the eigenvalues of its quadratic pencil reasonably well clustered. This improves the conditioning of $\vm A(\vm p)$, allowing for larger $\alpha$. The choice of $\alpha$ in 3) is motivated by enforcing the Schur stability of $\vm I - \alpha \vm A(\vm p)$, cf. \eqref{eq:linear_approx}. Although a distributed eigenvalue evaluation of the steps 2) and 3) is generally possible \cite{Penna}, and thus an adaptation of $\alpha$ and $\beta $ during the iterations, this aspect is not considered further.
\end{remark}

\section{Algorithmic Extensions Under Relaxed Second-Order Conditions}
\label{sec:Alg_extensions}
Up to this point, the formal convergence guarantee for Algorithm \ref{alg:SENSI_search_direction} depends on the uniform positive definiteness of the Hessian of the central Lagrangian~\eqref{eq:central_Lagrangian} at the central KKT point $\vm p\inds$, see Assumption \ref{ass:uniform_SOSC}. If this condition is not met, Algorithm \ref{alg:SENSI_search_direction} might not converge as shown in the next example.
\begin{example} \label{ex:2}
\textit{
Consider the optimization problem 
	\begin{alignat}{1} \label{eq:example_NLP}
		\min_{x_1,\, x_2} \quad x_1x_2 \quad 
	  \st \smallspace  x_1 - x_2 = 0 
	\end{alignat}
which is of the form \eqref{eq:central_NLP} with $f_{i}(x_i,x_j) = 0.5x_ix_j$, $i\in \{1,\,2\},\, j\neq i$ and $g_1(x_1,x_2)= x_1 -x_2$. Problem \eqref{eq:example_NLP} has a unique solution at $x_1\inds=x_2\inds=\lambda_1\inds = 0$ which satisfies the LICQ and SOSC but not $\nabla_{\vm x \vm x} L(\vm x\inds, \lambda_1\inds) \succ \vm 0$. Since \eqref{eq:example_NLP} is an equality constrained QP, the linear approximation in \eqref{eq:Dynamic_matrix} exactly represents the error progression of Algorithm \ref{alg:SENSI_search_direction}, where the eigenvalues of the matrix $\vm A(\vm p)$ in \eqref{eq:Dynamic_matrix} evaluate to $ \zeta_1 = 1$, $ \zeta_{2/3} = -0.5 \pm 0.5\sqrt{1- 8\beta}$. This demonstrates that the iteration \eqref{eq:linear_approx} diverges for any $\alpha$ or $\beta$, as $\mathrm{Re}(\zeta_{2/3})< 0$  due to the indefiniteness of $\nabla_{\vm x \vm x} L(\vm x\inds, \lambda_1\inds)$.}
\end{example}
To address this issue raised by Example \ref{ex:2}, we develop an algorithmic extension of Algorithm \ref{alg:SENSI_search_direction}, where we replace Assumption~\ref{ass:uniform_SOSC} with the standard SOSC \cite{Nocedal} and a partial SOSC which requires the Hessian of \eqref{eq:central_Lagrangian} to also be positive definite on the normal space of coupled constraints. 
The extension can be interpreted as a redesign of the primal-dual operator \eqref{eq:SBDP_plus_operator} in the sense that we add a penalty term that modifies the primal block of \eqref{eq:Dynamic_matrix} such that $\nabla \vm T(\vm p\inds)$ is Schur stable by an appropriate choice of $\alpha$ even when the original Hessian of \eqref{eq:central_Lagrangian} is indefinite off the tangent space of the constraints.

\subsection{SBDP+ under second-order sufficient conditions}
The following assumption formalizes the SOSC \cite{Nocedal}.
\begin{assumptionp}{2.a}\label{ass:SOSC}
	The KKT point $\vm p\inds$ of \eqref{eq:central_NLP} satisfies the SOSC, i.e., $\vm z\trans \nabla_{\vm x \vm x}^2L(\vm x \inds, \vm \lambda\inds, \vm \mu\inds) \vm z>0$ for all $\vm z \neq \vm 0 $ with $\nabla_{\vm x} \vm g(\vm x\inds) \vm z = \vm 0 $ and $[\nabla_{\vm x} \vm h(\vm x \inds )]_{\mathcal{A}(\vm x\inds)} \vm z= \vm 0$.
\end{assumptionp}
Clearly, Assumption \ref{ass:SOSC} is less restrictive than Assumption~\ref{ass:uniform_SOSC} since the Hessian of \eqref{eq:central_Lagrangian} only needs to be positive definite in admissible directions. 
To guarantee convergence of Algorithm \ref{alg:SENSI_search_direction} under the relaxed Assumption \ref{ass:SOSC}, we draw inspiration from augmented Lagrangian methods and compensate possible negative curvature in directions normal to the constraint manifold, cf. \cite[Sec. 3.2.1]{Bertsekas2}. Specifically, we modify the update \eqref{eq:primal_dual_update} to
\begin{align} \label{eq:primal_dual_update_v2}
	\vm p_i\indqn \!=\! \vm p_i \indq \!+\! \alpha \big[ \vm P_i\indq(\vm y_i\indq) (\vm y_i\indq \!-\! \vm d_i(\vm p_i\indq)) \!+ \!\!\!\!\!\!  \sum_{j \in {\mathcal{N}_i \cup \{i\}}} \!\! \!\!\!\!\vm P_{ij}\indq(\vm y_j\indq) \vm p_j\indq \big]
\end{align}
with the matrix-valued functions $\vm P_{ij}\indq : \mathbb{R}^{p_j} \rightarrow \mathbb{R}^{p_i \times p_j}$, defined as $\vm P_{ij}\indq(\vm y_j):=\gamma[\vm S_{ij}\indq(\vm y_j)\trans, \vm 0\trans, \vm 0\trans ]\trans$ with
\begin{align}
	\vm S_{ij}\indq(\vm y_j):= \nabla_{\vm x_i\indq}\trans \vm{\bar g}_j\indq\nabla_{\vm x_j\indq} \vm {\bar g}_j\indq +\nabla_{\vm x_i\indq}\trans \vm{\bar h}_j\indq \vm K_j^2  \nabla_{\vm x_j\indq} \vm {\bar h}_j\indq\label{eq:NeighborMixingMatrix}
\end{align}
for each $\agents$, $\neighs \cup \{i\}$. The matrices $\vm P_{ij}\indq(\vm y_j)$ only affect the primal update and integrate information of the primal search direction $\vm s_j$ of neighboring agents whereby $\gamma \in \mathbb{R}_{\geq0}$ describes an additional penalty parameter. However, similar to~\eqref{eq:gradient_withoutneighboraffine}, the functions $\vm P_{ij}\indq(\cdot)$ in \eqref{eq:NeighborMixingMatrix} may potentially involve decision variables of second-order neighbors of agent $i$. As with the sensitivities, this is addressed  by letting each agent $\agents$ compute the mirroring expression $(\nabla_{\vm s_j}\trans  \vm{\bar g}_i\indq(\vm s_i)\nabla_{\vm s_i} \vm{\bar g}_i\indq(\vm s_i) + \nabla_{\vm s_j}\trans \vm{\bar h}_i\indq(\vm s_i) \vm K_{i}^2 \nabla_{\vm s_i}  \vm{\bar h}_i\indq(\vm s_i)) \vm s_i = \vm S_{ji}\indq(\vm y_i)\vm s_i$ for each $\neighs$ instead and communicating it to the respective neighbors $\neighs$. Furthermore, the evaluation of the function $\vm S_{ji}\indq(\cdot) $ at $\vm y_i\indq$ only requires quantities that were already computed for the sensitivities \eqref{eq:gradient_withoutneighboraffine} and the update in \eqref{eq:primal_dual_update}. Thus, no additional computational effort is introduced. The process of solving the local NLPs \eqref{eq:local_NLP} remains unchanged.
 
The modified SBDP+ method under the SOSC assumption is given in Algorithm \ref{alg:SBDP_v2}. The basic algorithmic steps of Algorithm~\ref{alg:SENSI_search_direction} remain the same, we merely introduce the additional communication step 4 which requires the exchange of an extra $|\mathcal{N}_i| n_i$ floats per agent leading to a total communication load of $\sum_{\agents}3n_i|\mathcal{N}_i|$ floats per SBDP+ iteration and use the update rule \eqref{eq:primal_dual_update_v2} instead of \eqref{eq:primal_dual_update}. The communication requirement remains dependent on the size of the primal optimization vector and not the number of constraints, although it is increased w.r.t. the communication load of Algorithm~\ref{alg:SENSI_search_direction}. 
 \begin{algorithm}[tb]\small
 \caption{SBDP+ for each agent $\agents$ under SOSC}
 \begin{algorithmic}[1]
	\setcounter{ALG@line}{-1}
 	\State Initialize $\vm p_i^0$; Choose step sizes $\alpha, \beta>0$, penalty parameters $\rho_i,\gamma\geq0$, and tolerance $\epsilon>0$; send $\vm x_i^0$ to $\neighs$; set $q\leftarrow 0$
 	\State Compute $\nablaxj  L_i\indq$ via \eqref{eq:gradient_withoutneighboraffine} for all $\neighs$
 	\State Send  $\nablaxj  L_i\indq$ to the corresponding neighbor $\neighs$
 	\State Compute $ \vm y_i\indq$ by solving the decoupled NLP \eqref{eq:local_NLP} to local optimality
 	\State Send $\vm S_{ji}\indq(\vm y_i\indq) \vm s_i\indq$ to the corresponding neighbor $\neighs$
 	\State Assemble $\vm P_i\indq(\vm y_i\indq)$, $\vm P_{ii}\indq(\vm y_i\indq)$ and $\vm P_{ij}\indq(\vm y_j\indq)$, $\neighs$ and compute $\vm p_i\indqn$ via the update rule \eqref{eq:primal_dual_update_v2} or \eqref{eq:primal_dual_update_v3}
 	\State Send $\vm x_i\indqn $ to all neighbors $\neighs$
 	\State Stop if a suitable convergence criterion, e.g., $\|\vm s\indq \|_\infty \leq \epsilon$, is met. Otherwise, return to line~$1$ with $q \leftarrow q+1$.
 \end{algorithmic}\label{alg:SBDP_v2}
\end{algorithm}

\subsection{SBDP+ under partial second-order sufficient conditions}
We now consider the case of a partial SOSC (PSOSC) which causes a beneficial algorithmic simplification. To this end, we partition \eqref{eq:central_equality} -- \eqref{eq:central_inequality} into decoupled and coupled constraints
\begin{subequations} \label{eq:partitioning_into_local_and_coupled}
\begin{align}
	\vm g_i(\vm x_i, \Ni{x}) &= [\vm g_{ii}(\vm x_i)\trans, \vm g_{\mathcal{N}_i}(\vm x_i,\Ni{x})\trans ]\trans \\
	\vm h_i(\vm x_i, \Ni{x}) &= [\vm h_{ii}(\vm x_i)\trans, \vm h_{\mathcal{N}_i}(\vm x_i,\Ni{x})\trans ]\trans\,,
\end{align}
\end{subequations}
where $(\cdot)_{ii}$ depends exclusively on local variables and $(\cdot)_{\mathcal{N}_i}$ depends on neighboring and local variables. This leads to the reordered centralized constraints
\begin{equation}  \label{eq:reording_central_constraints}
	\vm g(\vm x) = [\vm{\bar{g}}(\vm x)\trans,\,\vm{\hat{g}}(\vm x)\trans]\trans\!,\,	\vm h(\vm x) = [\vm{\bar{h}}(\vm x)\trans,\,\vm{\hat{h}}(\vm x)\trans]\trans
\end{equation}
with $\vm{\bar{g}}(\vm x) := [\vm g_{ii}(\vm x_i)]_{\agents}$, $\vm{\hat{g}}(\vm x) := [\vm g_{\mathcal{N}_i}(\vm x_i,\Ni{x})]_{\agents}$,  $\vm{\bar{h}}(\vm x) := [\vm h_{ii}(\vm x_i)]_{\agents}$, and $\vm{\hat{h}}(\vm x) := [\vm h_{\mathcal{N}_i}(\vm x_i,\Ni{x})]_{\agents}$. 

In light of the partitioning \eqref{eq:partitioning_into_local_and_coupled} and the reordering \eqref{eq:reording_central_constraints}, we make the following assumption formalizing the partial SOSC. 
\begin{assumptionp}{2.b}\label{ass:regularity_local_constraints}
The KKT point $\vm p\inds$ of \eqref{eq:central_NLP} satisfies a partial SOSC (PSOSC), i.e., $\vm z\trans \nabla_{\vm x \vm x}^2L(\vm x \inds, \vm \lambda\inds, \vm \mu\inds) \vm z>0$ for all $\vm z \neq \vm 0 $ with $\nabla_{\vm x} \vm{\bar{g}}(\vm x\inds) \vm z = \vm 0 $ and $[\nabla_{\vm x}\vm{\bar{h}}(\vm x \inds )]_{\mathcal{A}(\vm x\inds)} \vm z= \vm 0$.
\end{assumptionp}

Assumption \ref{ass:regularity_local_constraints} is slightly stronger than the standard SOSC in Assumption \ref{ass:SOSC} since we exclude coupled constraints, i.e., the conditions $\nabla_{\vm x} \vm{\hat{g}}(\vm x\inds) \vm z = \vm 0 $ and $[\nabla_{\vm x} \vm{ \hat{h}}(\vm x \inds )]_{\mathcal{A}(\vm x\inds)} \vm z= \vm 0$, when demanding positive definiteness of the Hessian of \eqref{eq:central_Lagrangian} at the KKT point $\vm p\inds$.
This leads us to consider the following simplification of the update rule \eqref{eq:primal_dual_update_v2}
\begin{align} \label{eq:primal_dual_update_v3}
	\vm p_i\indqn = \vm p_i \indq + \alpha \big[ \vm P_i\indq(\vm y_i\indq) (\vm y_i\indq - \vm d_i(\vm p_i\indq)) + \vm P_{ii}\indq(\vm y_i\indq) \vm s_i\indq \big]\,,
\end{align}
where we only consider the correction term \eqref{eq:NeighborMixingMatrix} w.r.t. search direction $\vm s_i$, i.e., $\vm P_{ii}\indq(\vm y_i)=\gamma[\vm S_{ii}\indq(\vm y_i)\trans, \vm 0\trans, \vm 0\trans ]\trans$ with
\begin{align}
	\vm S_{ii}\indq(\vm y_i):= \nabla_{\vm x_i\indq}\trans  \vm{\bar g}_{ii}\indq \nabla_{\vm x_i\indq} \vm{\bar g}_{ii}\indq+\nabla_{\vm x_i\indq}\trans \vm{\bar h}_{ii}\indq\vm K_i^2  \nabla_{\vm x_i\indq} \vm{\bar h}_{ii}\indq \label{eq:AgentMixingMatrix}
\end{align} 
which only depends on locally available variables. 
This has the consequence that we can skip the communication Step 4 in Algorithm \ref{alg:SBDP_v2} and the computation of $\vm P_{ij}\indq(\vm y_j)$, $\neighs$, $j\neq i$ in Step 5 since $\vm P_{ii}\indq(\vm y_i)$ can be computed locally. This adaptation results in a change of the communication requirements of Algorithm \ref{alg:SBDP_v2} to two independent communication steps and a communication load of  $\sum_{\agents}2n_i|\mathcal{N}_i|$ floats per SBDP+ step which is identical to Algorithm \ref{alg:SENSI_search_direction}.

\subsection{Algorithmic analysis of Algorithm \ref{alg:SBDP_v2}}
The steps of the convergence analysis for Algorithm \ref{alg:SBDP_v2} stay largely the same as for Algorithm \ref{alg:SENSI_search_direction}. In particular, Lemma \ref{lem:optimality} and \ref{lem:solvability} remain valid since the local NLPs \eqref{eq:local_NLP} are unchanged. However, the update matrix $\vm P\indq(\vm y)$ in \eqref{eq:primal_dual_update_stacked} changes to
\begin{align}\label{eq:primal_dual_update_stacked_v2} 
\vm P\indq(\vm y) \!=\!\! \begin{bmatrix} 
	 \nabla_{\vm s \vm s}^2 \bar L\indq (\vm y) \!+\! \gamma \vm{\bar R}\indq(\vm y) \!\!&\!\!  \nabla_{\vm s}\trans \vm {\bar g}\indq(\vm s)  \!\!&\!\!  \nabla_{\vm s}\trans \vm {\bar h}\indq(\vm s)\\
	 -\beta \nabla_{\vm s} \vm {\bar g}\indq(\vm s) & \vm 0 & \vm 0\\
	-\beta \vm K \nabla_{\vm s} \vm {\bar h}\indq(\vm s) & \vm 0 & -\beta \vm H\indq(\vm s)
\end{bmatrix}
\end{align}
with ${\bar L}\indq (\vm y)= \sum_{\agents} {\bar L}_i\indq(\vm y_i)$, $\vm{\bar R}\indq(\vm y) = \nabla_{\vm s}\trans\vm{\bar g}\indq(\vm s) \nabla_{\vm s}\vm{\bar g}\indq(\vm s) + \nabla_{\vm s}\trans\vm{\bar h}\indq(\vm s) \vm K^2\nabla_{\vm s}\vm{\bar h}\indq(\vm s)$ and where additionally the complementarity condition $\vm K\indq \vm {\bar H}\indq(\vm s)$ is exploited. Considering the modified update matrix \eqref{eq:primal_dual_update_stacked_v2}, we can derive a first-order approximation of the error $\Delta \vm p\indq$ via the Jacobian of the modified operator $\vm T(\vm p)$ in \eqref{eq:SBDP_plus_operator}, similar to Lemma \ref{lem:approximation}. 
\begin{lemma}\label{lem:approximation_v2}
	Suppose Assumptions \ref{ass:constraint_regularity}, \ref{ass:SOSC}, \ref{ass:localSOSC}, and \ref{ass:localLICQ} hold. Then, a first-order approximation of Algorithm \ref{alg:SBDP_v2} under update rule~\eqref{eq:primal_dual_update_v2} is given by \eqref{eq:linear_approx} with 
\begin{align}\label{eq:Dynamic_matrix_v2}
	\vm A(\vm p) \!=\!\! \begin{bmatrix}\!
		\nabla_{\vm x \vm x}^2 L(\vm x, \vm \lambda,\vm \mu )\! +\! \gamma \vm R(\vm p)  \!\!&\!\! \vm J_g(\vm x) \!\!&\!\! \vm J_h(\vm x) \\
		\!-\beta  \vm J_g(\vm x) \!\!&\!\! \vm 0 \!\!&\!\! \vm 0\\
		\!-\beta  \vm U\vm J_h(\vm x) \!\!&\!\! \vm 0 \!\!&\!\! -\beta\vm H(\vm x)
	\end{bmatrix}
\end{align}
and $\vm R(\vm p) = \vm J_g(\vm x)\trans \vm J_g(\vm x) + \vm J_h(\vm x)\trans \vm U^2 \vm J_h(\vm x)$. 
\end{lemma}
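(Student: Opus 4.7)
The plan is to mirror the proof of Lemma~\ref{lem:approximation} in Appendix~\ref{app:3}, adjusting only the ingredients affected by the modified update rule \eqref{eq:primal_dual_update_v2}. Since the local NLPs \eqref{eq:local_NLP} are unchanged and the partitioning \eqref{eq:partitioning_into_local_and_coupled} does not alter the KKT structure, Lemmas~\ref{lem:optimality} and \ref{lem:solvability} remain applicable in a neighborhood of $\vm p\inds$, so $\vm \Phi$ is twice continuously differentiable and satisfies $\vm \Phi(\vm p\inds) = \vm d(\vm p\inds)$.

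First, I would cast \eqref{eq:primal_dual_update_v2} in stacked form as
\begin{equation*}
\vm p\indqn = \vm p\indq + \alpha \, \vm P\indq\!\bigl(\vm \Phi(\vm p\indq)\bigr)\bigl(\vm \Phi(\vm p\indq) - \vm d(\vm p\indq)\bigr),
\end{equation*}
with the modified matrix \eqref{eq:primal_dual_update_stacked_v2}. The key bookkeeping step is to verify that assembling $\sum_{j\in \mathcal{N}_i \cup\{i\}} \vm P_{ij}\indq(\vm y_j)\vm s_j$ across all agents $\agents$ produces precisely the additional contribution $\gamma \vm{\bar R}(\vm y)\vm s$ in the primal block of \eqref{eq:primal_dual_update_stacked_v2}. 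This uses the mirroring identity $\vm S_{ji}\indq(\vm y_i)\vm s_i$ (so that every neighbor pair is picked up exactly once with the correct indices) together with the complementarity condition $\vm K\vm{\bar H}\indq(\vm s) = \vm 0$ appearing in \eqref{eq:localKKT} to collapse cross terms into the stated quadratic form.

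Next, set $\vm F(\vm p) := \vm P\indq(\vm \Phi(\vm p))(\vm \Phi(\vm p) - \vm d(\vm p))$ and observe $\vm F(\vm p\inds) = \vm 0$. A first-order Taylor expansion about $\vm p\inds$ gives
\begin{equation*}
\Delta \vm p\indqn = \bigl(\vm I + \alpha\, \nabla_{\vm p}\vm F(\vm p\inds)\bigr)\,\Delta \vm p\indq + \vm r(\|\Delta \vm p\indq\|^2).
\end{equation*}
Because the factor $\vm \Phi - \vm d$ vanishes at $\vm p\inds$, the product rule kills the derivative of $\vm P\indq$ and one is left with $\nabla_{\vm p}\vm F(\vm p\inds) = \vm P\indq(\vm d(\vm p\inds))\,(\nabla_{\vm p}\vm \Phi(\vm p\inds) - \nabla_{\vm p}\vm d(\vm p\inds))$. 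The Jacobian $\nabla_{\vm p}\vm \Phi(\vm p\inds)$ is then obtained by implicit differentiation of the stacked local KKT system \eqref{eq:localKKT} in exactly the manner of Appendix~\ref{app:3}. Identifying $\vm A(\vm p\inds) := -\nabla_{\vm p}\vm F(\vm p\inds)$ and carrying out the matrix product reproduces the block pattern of \eqref{eq:Dynamic_matrix}, with the only new feature being the additive $\gamma \vm R(\vm p\inds)$ term in the $(1,1)$ block arising from the $\gamma \vm{\bar R}$ entry of \eqref{eq:primal_dual_update_stacked_v2} evaluated at $\vm \Phi(\vm p\inds)$, where $\vm K\to \vm U\inds$ recovers $\vm J_g(\vm x\inds)\trans \vm J_g(\vm x\inds) + \vm J_h(\vm x\inds)\trans {\vm U\inds}^{2} \vm J_h(\vm x\inds)$.

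The main obstacle is the aggregation argument in the first step: showing that the decentralized corrections $\sum_{j} \vm P_{ij}\indq(\vm y_j)\vm s_j$, after application of the mirroring convention, are structurally equivalent at $\vm p\inds$ to the clean central quadratic form $\vm J_g(\vm x\inds)\trans \vm J_g(\vm x\inds) + \vm J_h(\vm x\inds)\trans{\vm U\inds}^2 \vm J_h(\vm x\inds)$. Once this bookkeeping is settled, the remainder is a routine adaptation of the implicit-function-theorem computation already executed in Appendix~\ref{app:3}.
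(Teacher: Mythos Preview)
Your proposal is correct and follows essentially the same route as the paper's proof, which simply invokes the linearization machinery of Appendix~\ref{app:3} with the modified stacked matrix \eqref{eq:primal_dual_update_stacked_v2}. The only difference is one of emphasis: the paper skips the Taylor-expansion bookkeeping entirely and instead records the single factorization $\vm P(\vm p\inds) = \begin{bmatrix}\vm I & \gamma \vm J_g(\vm x\inds)\trans & \gamma \vm J_h(\vm x\inds)\trans \vm U\inds \\ \vm 0 & -\beta \vm I & \vm 0 \\ \vm 0 & \vm 0 & -\beta \vm I\end{bmatrix}\vm M(\vm p\inds)$, so that in \eqref{eq:linearized_algorithm2} the factor $\vm M(\vm p\inds)^{-1}$ cancels and $\vm A(\vm p\inds)$ drops out immediately---you might want to highlight this shortcut rather than ``carrying out the matrix product'' by hand.
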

\begin{proof}
	See Appendix \ref{app:7}. 
\end{proof}
Considering the new structure of the Jacobian $\nabla \vm T(\vm p\inds)$, we derive the following convergence result for Algorithm \ref{alg:SBDP_v2}.
\begin{theorem} \label{th:conv_v2}
	Let Assumptions \ref{ass:constraint_regularity}, \ref{ass:SOSC}, \ref{ass:localSOSC}, and \ref{ass:localLICQ} hold. Then, there exists an upper bound on the step size $\bar{\alpha}>0$, a lower bound on the penalty parameter $\bar \gamma<\infty$, and a radius~$r$ such that if $\alpha < \bar{\alpha}$, $\gamma > \bar{\gamma}$ and $\vm p^{0} \in \mathcal{B}_r(\vm p\inds)$ the sequence $\{\vm p\indq\}$ generated by Algorithm~\ref{alg:SBDP_v2} under the update rule \eqref{eq:primal_dual_update_v2} is bounded and converges asymptotically to a central KKT point $\vm p\inds$ of NLP~\eqref{eq:central_NLP}. Moreover, for all $q=1,2,\dots$ Algorithm \ref{alg:SBDP_v2} is linearly convergent in the sense of~\eqref{eq:linear_convergence_in_P_norm} or~\eqref{eq:R_linear_convergence}.
\end{theorem}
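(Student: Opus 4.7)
The proof plan is to reduce the situation to that of Theorem \ref{th:conv} by exploiting the extra penalty term $\gamma \vm R(\vm p^*)$ appearing in the upper-left block of the matrix $\vm A(\vm p^*)$ in \eqref{eq:Dynamic_matrix_v2}. The first step is to invoke Lemma~\ref{lem:approximation_v2}, which provides the same linearized error recursion as in Lemma~\ref{lem:approximation} but with the augmented upper-left block $\nabla_{\vm x\vm x}^2 L(\vm p^*) + \gamma \vm R(\vm p^*)$. Because the remaining block-structure of $\vm A(\vm p^*)$ in \eqref{eq:Dynamic_matrix_v2} is identical to \eqref{eq:Dynamic_matrix}, once this augmented block is made uniformly positive definite, the convergence argument of Theorem~\ref{th:conv} can be replayed verbatim.

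The central analytical step is therefore to show that a finite threshold $\bar\gamma<\infty$ exists such that $\nabla_{\vm x\vm x}^2 L(\vm p^*) + \gamma \vm R(\vm p^*) \succ \vm 0$ for all $\gamma>\bar\gamma$. This is the classical Debreu-type lemma from augmented Lagrangian theory (cf.\ \cite[Sec.~3.2.1]{Bertsekas2}). Under Assumption~\ref{ass:SOSC}, positive definiteness of $\nabla_{\vm x\vm x}^2 L(\vm p^*)$ holds on the subspace $\vm z\trans \nabla_{\vm x\vm x}^2 L(\vm p^*) \vm z>0$ for every nonzero $\vm z$ with $\nabla_{\vm x}\vm g(\vm x^*)\vm z= \vm 0$ and $[\nabla_{\vm x}\vm h(\vm x^*)]_{\mathcal{A}(\vm x^*)}\vm z = \vm 0$. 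Strict complementarity (Assumption~\ref{ass:strong_regularity}.i) ensures that $[\vm U^*]_{kk}>0$ exactly on $\mathcal{A}(\vm x^*)$, so the null space of $\vm R(\vm p^*) = \vm J_g(\vm x^*)\trans\vm J_g(\vm x^*) + \vm J_h(\vm x^*)\trans \vm U^{*2}\vm J_h(\vm x^*)$ coincides with the critical subspace on which the Hessian is already positive definite. A standard argument by contradiction (assuming a sequence $\gamma_k\to\infty$ for which positive definiteness fails, extracting a unit-norm limit direction forced into the null space of $\vm R(\vm p^*)$, and reaching a contradiction with SOSC) then yields the required finite $\bar\gamma$.

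With $\gamma>\bar\gamma$ fixed, the matrix $\vm A(\vm p^*)$ in \eqref{eq:Dynamic_matrix_v2} satisfies exactly the structural hypotheses that were used in Theorem~\ref{th:conv}: the upper-left block is symmetric positive definite, the constraint Jacobians enter symmetrically up to the $\beta$-scaling, and the lower-right block is $-\beta\vm H(\vm x^*)\succeq \vm 0$ with zeros precisely on active indices, matching the LICQ/SC structure. I can therefore transport the spectral analysis of Theorem~\ref{th:conv} (eigenvalues of $\vm A(\vm p^*)$ lie in the open right half plane; eigenvalues of $\vm I - \alpha\vm A(\vm p^*)$ lie strictly inside the unit disk for $\alpha<\bar\alpha$) without modification, and solve the discrete-time Lyapunov equation \eqref{eq:Lypunov_equation_theorem} to obtain a positive definite $\vm{\bar P}$. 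The Lyapunov function $V(\vm p) = \|\vm p - \vm p^*\|_{\vm{\bar P}}^2$ together with the bound $\vm r(\|\Delta\vm p\indq\|^2)\in\mathcal{O}(\|\Delta\vm p\indq\|^2)$ from Lemma~\ref{lem:approximation_v2} yields a sublevel-set radius $r>0$ which is forward invariant, gives boundedness of $\{\vm p\indq\}$, and establishes Q-linear convergence in $\|\cdot\|_{\vm{\bar P}}$ and R-linear convergence in $\|\cdot\|$ via equivalence of norms.

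The main obstacle is establishing the finite threshold $\bar\gamma$ cleanly; the subtle point is that $\vm R(\vm p^*)$ weights the inequality-constraint rows by $(\vm U^*)^2$ rather than projecting onto the active set, and one must verify via strict complementarity that the effective null space is nonetheless the critical cone of Assumption~\ref{ass:SOSC}. Once this compatibility is in place, the rest is an inheritance argument from Theorem~\ref{th:conv} and does not require any new spectral computation.
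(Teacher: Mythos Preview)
Your proposal is correct and follows essentially the same route as the paper: invoke Lemma~\ref{lem:approximation_v2}, use the augmented-Lagrangian result \cite[Lem.~3.2.1]{Bertsekas2} together with strict complementarity to make the upper-left block $\nabla_{\vm x\vm x}^2 L(\vm p^*)+\gamma\vm R(\vm p^*)$ positive definite for $\gamma>\bar\gamma$, and then rerun the spectral and Lyapunov arguments of Theorem~\ref{th:conv} verbatim with this block in place of $\vm L(\vm p^*)$. The paper makes the active/inactive partitioning explicit before applying Bertsekas's lemma, but your observation that $(\vm U^*)^2$ already kills the inactive rows under SC is exactly what bridges the two presentations.
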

\begin{proof}
	See Appendix \ref{app:8}. 
\end{proof}
Theorem \ref{th:conv_v2} reveals that the penalty parameter $\gamma$ in \eqref{eq:primal_dual_update_v2} needs to be chosen sufficiently large to compensate the negative curvature of the Lagrangian in directions normal to the constraints in order to obtain the same convergence properties of Theorem~\ref{th:conv}. This fact is further illustrated with Example \ref{ex:2}.
\begin{contexample}
	\textit{
	Returning to the problem \eqref{eq:example_NLP}, we apply Algorithm \ref{alg:SBDP_v2} with update rule \eqref{eq:primal_dual_update_v2} and obtain the modified matrix $\vm A(\vm p)$ in \eqref{eq:Dynamic_matrix_v2} with the eigenvalues $ \zeta_1 = 1$ and $ \zeta_{2/3} = \gamma -0.5 \pm 0.5 \sqrt{4\gamma(\gamma -1) - 8 \beta +1}$ which have a positive real part for $\gamma> \bar \gamma = 0.5$, showing that Algorithm \ref{alg:SBDP_v2} is convergent for sufficiently small~$\alpha$. }
\end{contexample}
The following corollary regards the convergence of Algorithm~\ref{alg:SBDP_v2} with the update rule \eqref{eq:primal_dual_update_v3} under Assumption \ref{ass:regularity_local_constraints}. 
 \begin{corollary} \label{cor:stricter_SOSC}
	Let Assumptions \ref{ass:constraint_regularity}, \ref{ass:regularity_local_constraints}, \ref{ass:localSOSC}, and \ref{ass:localLICQ} hold. Then, Theorem \ref{th:conv_v2} remains valid for the iterates generated by Algorithm \ref{alg:SBDP_v2} under the update rule \eqref{eq:primal_dual_update_v3}.
\end{corollary}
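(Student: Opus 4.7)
The plan is to follow the same three-stage argument used for Theorem~\ref{th:conv_v2}: (i) derive a first-order error recursion for Algorithm~\ref{alg:SBDP_v2} under the simplified update \eqref{eq:primal_dual_update_v3}; (ii) show that, under the partial SOSC in Assumption~\ref{ass:regularity_local_constraints}, the corresponding dynamic matrix $\vm A(\vm p\inds)$ has all eigenvalues with strictly positive real part provided $\gamma$ is chosen large enough; and (iii) invoke the Schur-stability / discrete Lyapunov argument of Theorem~\ref{th:conv_v2} to conclude local, linear convergence. Lemmas~\ref{lem:optimality} and \ref{lem:solvability} are unaffected since the local NLPs \eqref{eq:local_NLP} and the mapping $\vm\Phi(\cdot)$ are unchanged.

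For step (i), note that the only difference relative to \eqref{eq:primal_dual_update_v2} is that the sum over $j\in\mathcal N_i\cup\{i\}$ is replaced by the single self-term $\vm P_{ii}\indq(\vm y_i\indq)\vm s_i\indq$. Writing the stacked update in the form \eqref{eq:primal_dual_update_stacked} with the mixing matrix \eqref{eq:primal_dual_update_stacked_v2}, the correction block $\gamma\vm{\bar R}(\vm y)$ is replaced by $\gamma\vm{\tilde R}(\vm y)$ built exclusively from the decoupled constraint Jacobians, so that after the same linearization steps as in the proof of Lemma~\ref{lem:approximation_v2} (Taylor expansion of $\vm \Phi(\vm p)$ around $\vm p\inds$, elimination of the terms of order $\|\Delta\vm p\indq\|^2$, and use of strict complementarity through Lemma~\ref{lem:solvability}) one obtains the recursion \eqref{eq:linear_approx} with
\begin{equation*}
\vm A(\vm p\inds)=
\begin{bmatrix}
\nabla_{\vm x\vm x}^2 L(\vm x\inds,\vm \lambda\inds,\vm \mu\inds)+\gamma\vm{\bar R}_{\mathrm{dec}}(\vm p\inds) & \vm J_g(\vm x\inds)\trans & \vm J_h(\vm x\inds)\trans\\
-\beta\vm J_g(\vm x\inds) & \vm 0 & \vm 0\\
-\beta\vm U\inds\vm J_h(\vm x\inds) & \vm 0 & -\beta\vm H(\vm x\inds)
\end{bmatrix},
\end{equation*}
where $\vm{\bar R}_{\mathrm{dec}}(\vm p):=\vm J_{\bar g}(\vm x)\trans\vm J_{\bar g}(\vm x)+\vm J_{\bar h}(\vm x)\trans\vm U^{2}\vm J_{\bar h}(\vm x)$ uses only the Jacobians of the decoupled constraints $\vm{\bar g}$, $\vm{\bar h}$ from the reordering \eqref{eq:reording_central_constraints}.

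For step (ii), I would show that the modified Hessian block $\vm H_\gamma:=\nabla_{\vm x\vm x}^2 L(\vm x\inds,\vm\lambda\inds,\vm\mu\inds)+\gamma\vm{\bar R}_{\mathrm{dec}}(\vm p\inds)$ is positive definite for every sufficiently large $\gamma$. This is the key use of Assumption~\ref{ass:regularity_local_constraints}: by strict complementarity $\vm U\inds$ selects exactly the active decoupled inequalities, so $\ker\vm{\bar R}_{\mathrm{dec}}(\vm p\inds)=\{\vm z\in\mathbb R^n\,|\,\nabla_{\vm x}\vm{\bar g}(\vm x\inds)\vm z=\vm 0,\;[\nabla_{\vm x}\vm{\bar h}(\vm x\inds)]_{\mathcal A(\vm x\inds)}\vm z=\vm 0\}$, which is precisely the subspace on which the partial SOSC asserts positive definiteness of $\nabla_{\vm x\vm x}^2L$. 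A standard homogeneity / compactness argument on the unit sphere then yields a finite threshold $\bar\gamma<\infty$ with $\vm H_\gamma\succ\vm 0$ for all $\gamma>\bar\gamma$.

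For step (iii), with $\vm H_\gamma\succ\vm 0$ secured, the block structure of $\vm A(\vm p\inds)$ is identical to that analysed in the proof of Theorem~\ref{th:conv_v2}: the (1,1)~block is positive definite, the (2,1) and (3,1) blocks are scaled negative transposes of the (1,2) and (1,3) blocks up to the factor $\beta\vm U\inds$, and the (3,3) block $-\beta\vm H(\vm x\inds)$ is positive semidefinite with $\vm H(\vm x\inds)$ having strictly negative diagonal entries on the inactive set (hence $-\vm H\inds$ there is positive), while the active entries combine with $\vm U\inds\succ\vm 0$ through Assumption~\ref{ass:localLICQ} to make $\vm A(\vm p\inds)$ invertible with spectrum in the open right half-plane. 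Therefore $\vm I-\alpha\vm A(\vm p\inds)$ is Schur stable for all $\alpha\in(0,\bar\alpha)$ with $\bar\alpha>0$ sufficiently small, and the discrete Lyapunov equation \eqref{eq:Lypunov_equation_theorem} admits $\vm{\bar P}\succ\vm 0$, giving Q-linear convergence in $\|\cdot\|_{\vm{\bar P}}$ and R-linear convergence in $\|\cdot\|$ verbatim as in Theorem~\ref{th:conv_v2}.

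The principal obstacle I anticipate is step (ii): one must verify that the kernel of $\vm{\bar R}_{\mathrm{dec}}(\vm p\inds)$ really matches the critical subspace appearing in Assumption~\ref{ass:regularity_local_constraints}, and that the positive-definiteness threshold $\bar\gamma$ can be chosen uniformly so that the linearisation result carries over. Once this identification is made, the remaining steps are mechanical adaptations of the arguments already developed for Theorem~\ref{th:conv_v2}.
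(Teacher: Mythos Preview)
Your proposal is correct and follows essentially the same route as the paper's own proof. The paper factorizes the stacked update matrix as in \eqref{eq:preconditioning_matrix} but with $\vm J_g,\vm J_h$ replaced by the zero-padded decoupled Jacobians $\vm{\tilde J}_g,\vm{\tilde J}_h$, obtains the sub-matrix $\vm{\bar A}$ with $(1,1)$~block $\vm L(\vm p\inds)+\gamma\vm{\tilde J}(\vm x\inds)\trans(\vm{\bar U}\inds)^2\vm J(\vm x\inds)$, observes that $\vm{\tilde J}\trans\vm J$ collapses to $\nabla_{\vm x}\vm{\bar g}\trans\nabla_{\vm x}\vm{\bar g}+[\nabla_{\vm x}\vm{\bar h}]_{\mathcal A}\trans[\nabla_{\vm x}\vm{\bar h}]_{\mathcal A}$ (your $\vm{\bar R}_{\mathrm{dec}}$), invokes \cite[Lem.~3.2.1]{Bertsekas2} together with Assumption~\ref{ass:regularity_local_constraints} to get positive definiteness for $\gamma>\bar\gamma$, and then defers to the proof of Theorem~\ref{th:conv}---exactly your steps (i)--(iii).
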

\begin{proof}
	See Appendix \ref{app:9}. 
\end{proof}
Corollary \ref{cor:stricter_SOSC} shows that the additional communication step in Algorithm \ref{alg:SBDP_v2} is only required when the Hessian of \eqref{eq:central_Lagrangian} at $\vm p^\star$ is not positive definite on the normal space of the coupled constraints. Otherwise, local convergence is already guaranteed by the update law \eqref{eq:primal_dual_update_v3}. Moreover, Algorithm \ref{alg:SBDP_v2} retains the local convergence properties of Algorithm \ref{alg:SENSI_search_direction} when the tuning guideline in Section \ref{subsec:practicalTuning} is applied using the modified Hessian $\nabla_{\vm x \vm x}^2 L(\vm x,\vm \lambda,\vm \mu)+\gamma \vm R(\vm p)$ in \eqref{eq:beta_update1}, with sufficiently large $\gamma$ according to Theorem~\ref{th:conv_v2}.

\subsection{Summary of convergence results}
A summary of the convergence results of the respective SBDP variants is given in Table \ref{tab:conv_results}. The first column states the considered algorithm with corresponding update rule, the second the assumptions made on the KKT point of the central NLP, and the third denotes the order of convergence. Note that the centralized SC requirement always implies that SC holds for the local NLPs. Furthermore, the (U/P)SOSC assumptions are fundamentally different from the GDDC.  (U/P)SOSC is an intrinsic curvature condition on the centralized NLP \eqref{eq:central_NLP} and is independent of the graph-induced decomposition, whereas GDDC is a structural condition on the decomposition itself that regards the quality of the block-diagonal approximation. In particular, USOSC is invariant under repartitioning, while GDDC may hold or fail depending on the given decomposition. In general, neither assumption implies the other. 
\begin{table}[bt]
\caption{Summary of convergence results.}
\begin{tabular}{ 
  |c|c|c|c| }
 \hline
 algorithm & assumptions & conv. order \\
 \hline
Alg. \ref{alg:SENSI_search_direction} with \eqref{eq:primal_dual_update} & USOSC, SC, LICQ, CCD & linear (weight.) \\
Alg. \ref{alg:SBDP_v2} with \eqref{eq:primal_dual_update_v2} & SOSC, SC, LICQ, CCD& linear (weight.) \\
Alg. \ref{alg:SBDP_v2} with \eqref{eq:primal_dual_update_v3} & PSOSC, SC, LICQ, CCD & linear (weight.) \\
SBDP \cite{Pierer} & SOSC, SC, LICQ, CCD, GDDC & linear/quadratic \\
\hline
\end{tabular} \label{tab:conv_results}
\end{table}
\section{Numerical Validation and Evaluation}
\label{sec:Numerical_Evaluation}
Two numerical examples are presented to evaluate the performance of the SBDP+ method. The first involves a simple non-convex problem, used to validate Theorem \ref{th:conv} and compares SBDP+ to representative algorithms such as ADMM and ALADIN. The second example focuses on a classification problem to show the applicability to large-scale statistical learning problems. 
\subsection{Convergence properties}
We investigate the convergence properties of Algorithm~\ref{alg:SENSI_search_direction} for the following non-convex, constrained NLP \cite{Engelmann2}
	\begin{subequations}\label{eq:conv_prop_NLP}
	\begin{alignat}{1}
		\min_{x_1,\, x_2} \quad & 2(x_1 - 1)^2 + (x_2 - 2)^2  \label{eq:conv_prop_NLP_cost} 	\\
		\st \quad&  0 \geq -1 -x_1x_2 \label{eq:conv_prop_NLP_constraint1}\,,\smallspace 0 \geq-1.5 + x_1x_2  
	\end{alignat}
\end{subequations}
with decision variables $x_1, x_2 \in \mathbb{R}$. We allocate each variable to one of the agents $\agents =\{1,\,2\}$, define the local objectives $f_1(x_1,x_2) = 2(x_1 - 1)^2  $ and  $f_2(x_2,x_1) =(x_2 - 2)^2  $ together with the inequality constraints $h_1(x_1,x_2) = -1 -x_1x_2$ and $h_2(x_2,x_1) = -1.5 +x_1x_2$ yielding the structure~\eqref{eq:central_NLP}. Thus, the coupling in \eqref{eq:central_NLP} arises via constraints, while the cost function remains separable. The sensitivity terms in \eqref{eq:gradient_withoutneighboraffine} become $\nabla_{x_2} L_1(x_1,\mu_1,x_2) = -\mu_1 x_1 $ and $\nabla_{x_1} L_2(x_2,\mu_2,x_1) =\mu_2x_2 $. Subsequently, the local NLPs \eqref{eq:local_NLP} have the objectives $\bar f\indq_1(s_1) = 2(x_1\indq + s_1 -1)^2 + \rho_i s_1^2$ and $\bar f\indq_2(s_2) = (x_2\indq + s_2 -2)^2 + \rho_i s_2^2$ and local constraints  $h_1\indq(s_1)= -1 -(x_1\indq +s_1) x_2\indq$ and $h_2\indq(s_2)= -1.5 -x_1\indq (x_2\indq + s_2)$. 
We apply Algorithm~\ref{alg:SENSI_search_direction} and illustrate the applicability of Theorem~\ref{th:conv} by investigating the convergence behavior around the locally unique optimal solution of NLP \eqref{eq:conv_prop_NLP} at $\vm x\inds \approx [0.82,\,1.84]\trans$, $\vm \mu\inds \approx[0,\, 0.4]\trans $ satisfying Assumptions \ref{ass:constraint_regularity} to \ref{ass:localLICQ} for $\rho_i=0$. We set $\beta = \beta(\vm p\inds) \approx 2$ according to \eqref{eq:beta_update1}. At first, we compute the maximum step size $\bar \alpha $ by constructing $\vm A(\vm p\inds)$ as defined in \eqref{eq:Dynamic_matrix} with 
\begin{align}
\nabla_{\vm x \vm x}^2 L(\vm x\inds, \vm \mu\inds) \!=\!\! \begin{bmatrix}
4 \!&\! \mu_2\inds \\
\mu_2\inds \!&\! 2
\end{bmatrix},\nabla_{\vm x} \vm h(\vm x\inds) \!=\!\! \begin{bmatrix}
-x_2\inds \!&\! -x_1\inds\\
x_2\inds \!&\! x_1\inds
\end{bmatrix},
\end{align}
and $\vm H(\vm x\inds) = \diag([h_1(x_1\inds, x_2\inds),\, h_2(x_2\inds,x_1\inds)])$ and investigate for which $\alpha$ the matrix $\vm I - \alpha \vm A(\vm p\inds)$ is Schur stable. This leads to $\bar \alpha = 0.4$ such that we set $\alpha = 0.35$. Then, we solve the discrete-time Riccati equation \eqref{eq:Lypunov_equation_theorem} with $\vm Q= \vm I$ to obtain $\vm{ \bar P}$ and the convergence constant in \eqref{eq:linear_convergence_in_P_norm} as $C= 0.76$, and in \eqref{eq:R_linear_convergence} as $C_0 = 2.07$ and $C_1=0.88$. The computation of the convergence radius $r$ is more involved. For example, the radius $r_1$ in Lemma~\ref{lem:solvability} depends on continuity properties of the underlying functions and the size of the domain where the implicit function theorem applies \cite[Thm. 3.2.2]{Fiacco}, typically leading to conservative estimates. Thus, we estimate a practical region of attraction empirically by testing candidate initializations $\vm p^0$  and verifying that all iterates satisfy Statement ii) of Lemma~\ref{lem:solvability} and that the Lyapunov function $V(\Delta \vm p) = \Delta \vm p\trans \vm {\bar P} \Delta \vm p$ decreases monotonically. 
For the initial guess $\vm p_0 =[1.4,\, 1.4\,, 0\,,0]\trans$, the properties are satisfied and Figure \ref{fig:conv_curves} shows the progression of the error norms $\|\Delta \vm p\indq\|$ and $\|\Delta \vm p\indq\|_{\vm{\bar P}}$ as well as the convergence rate estimate $C^q\|\Delta \vm p^0\|_{\vm{\bar P}}$ of \eqref{eq:linear_convergence_in_P_norm}. We observe that $\|\Delta \vm p\indq\|$ is not monotonically decreasing and converges R-linearly in the sense of \eqref{eq:R_linear_convergence}, while the weighted norm $\|\Delta \vm p\indq\|_{\vm{\bar P}}$ decreases linearly, as stated by Theorem~\ref{th:conv}, until the residual norm plateaus around $10^{-8}$ due to the solution tolerance of the local NLPs~\eqref{eq:local_NLP}. For $\alpha = 0.6> \bar \alpha$, divergence is observed as the step size is too large. Furthermore, $\|\Delta \vm p\indq\|_{\vm{\bar P}}$ is always upper bounded by the (conservative) estimate $C^q\|\Delta \vm p^0\|_{\vm{\bar P}}$.
\begin{figure}[tb]
	\centering
	\includegraphics{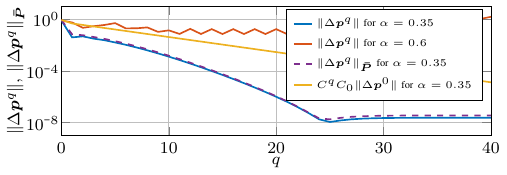}
	\vspace{-4mm}
	\caption{Errors $\|\Delta \vm p\indq\|$ and $\|\Delta \vm p\indq\|_{\vm P}$ for Algorithm \ref{alg:SENSI_search_direction} with envelope function $C^qC_0\|\Delta \vm p^0\|$ applied to NLP \eqref{eq:conv_prop_NLP} for $\vm p^0$ within $\mathcal{B}_r(\vm p\inds)$.}
	\label{fig:conv_curves}
\end{figure}
\subsection{Comparison with SBDP, ADMM and ALADIN}
We compare the proposed SBDP+ method with ADMM~\cite{Boyd}, SBDP~\cite{Pierer}, ALADIN \cite{Houska} and the bi-level D-ALADIN variant \cite{Engelmann}, where the coordination QP is solved via ADMM. Further details on the algorithms are available in the respective references.
 \begin{figure}[tb]
 	\centering
	\includegraphics{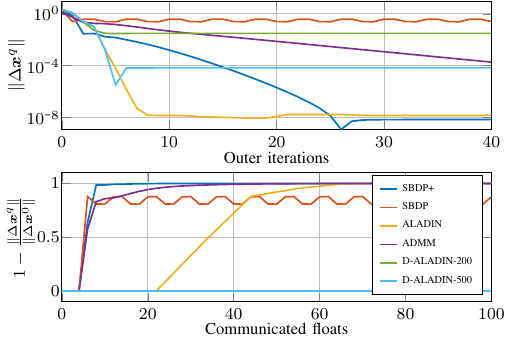}
 	\vspace{-4mm}
 	\caption{Comparison of SBDP+, SBDP, ALADIN, ADMM, and D-ALADIN with $200$ and $500$ inner ADMM iterations for NLP \eqref{eq:conv_prop_NLP} in terms of outer iterations (top) and communicated floats (bottom).}
 	\label{fig:Comparision_ADMM_ALADIN}
 \end{figure}
Figure \ref{fig:Comparision_ADMM_ALADIN} compares the convergence of SBDP(+), ALADIN, ADMM, D-ALADIN with $200$ and $500$ inner ADMM iterations applied to NLP \eqref{eq:conv_prop_NLP} in terms of outer iterations and communication effort, where for all algorithms the initial iterates are initialized to zero, e.g., $\vm p^0 = \vm 0$ for SBDP+. For ALADIN and D-ALADIN we use the implementation offered by the toolbox ALADIN-$\alpha$ \cite{Engelmann2} with the default parameter choice. The penalty parameter of ADMM is tuned to $1$ for good convergence in the set $\{0.1,\,1,\,10\}$. Clearly, ALADIN has the best performance since it offers local quadratic convergence guarantees. However, this performance comes at the price of centralized computations and increased communication effort which is visible in the lower plot of Figure \ref{fig:Comparision_ADMM_ALADIN}. While D-ALADIN distributes these computations, it only manages to converge to suboptimal solutions with a limited number of inner iterations \cite{Engelmann}. As in Example \ref{ex:1}, SBDP diverges as the KKT point of NLP \eqref{eq:conv_prop_NLP} violates the GDDC \cite{Pierer}, i.e., $\| \vm I - \vm M(\vm p\inds)^{-1} \vm N(\vm p\inds)\| \approx 5.5>1$.

\subsection{Application to distributed logistic regression}
As a second application example, we apply SBDP+ to the regularized logistic regression problem
\begin{subequations}\label{eq:log_regression}
	\begin{alignat}{1}
		\min_{\vm x} \quad & \frac{1}{m}\sum_{k=1}^{m} \log(1 + \exp(-b_k\vm a_k\trans \vm x)) + \frac{\epsilon}{2} \|\vm x\|^2  \label{eq:log_regression_cost} \\
		\st \quad&  \vm x_{\mathrm{min}} \leq \vm x \leq \vm x_{\mathrm{max}} \label{eq:log_regression_constraint} 
	\end{alignat}
\end{subequations}
with parameter vector $\vm x \in \mathbb{R}^n$, regularization parameter $\epsilon \in \mathbb{R}_{\geq0}$ and the training set which consists of $m$ pairs $(\vm a_k,b_k)$. Hereby, $\vm a_k \in \mathbb{R}^n$ is the feature vector and $b_k\in \{-1,1\}$ is the corresponding label. The box constraint \eqref{eq:log_regression_constraint} may incorporate prior knowledge. We randomly generate a problem instance with $m=200$ samples and $n=100$ features. The true weight vector $\vm x^{\mathrm{true}}$ is sampled from a standard normal distribution to generate the labels $b_k = \sign(\vm a_k\trans \vm x^{\mathrm{true}} + v_k)$, where $v_k \sim \mathcal{N}(0,0.1)$ is random noise. Following~\cite[Sec. 8.3]{Boyd}, the problem is decomposed across features into $M=10$ subsystems with variables $\vm x_i\in\mathbb{R}^{n/M}$. We set $\vm x_{\max}=-\vm x_{\min}=0.25\mathds{1}$ and apply Algorithm~\ref{alg:SENSI_search_direction} with $\rho_i=0.01$, $\alpha=0.85$, and $\vm P_i^q(\vm y_i)=\vm I$ according to Corollary~\ref{cor:decoupled_constraints} since NLP \eqref{eq:log_regression} is decoupled in the constraints.
Figure \ref{fig:conv_curves_logistic_regression} compares the error norm of SBDP+ and ADMM \cite[Sec. 8.3]{Boyd} by iteration to the central solution for $\epsilon = 0.1$. Both methods reach a numerical accuracy of around $10^{-6}$ limited by the local solver tolerance. ADMM uses a tuned penalty parameter of $0.1 \in \{0.01,\,0.1,\,1\} $, and requires solving a centralized QP, while SBDP+ relies solely on local computations.
 \begin{figure}[tb]
	\centering
	\includegraphics{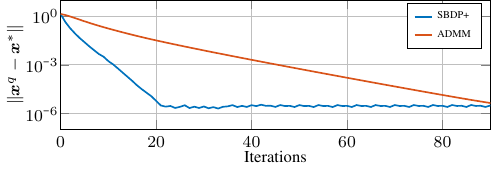}
	\vspace{-4mm}
	\caption{Error progression of Algorithm \ref{alg:SENSI_search_direction} with $\vm P_i\indq(\vm y_i) = \vm I$ applied to the regression problem \eqref{eq:log_regression} compared to ADMM with $M=10$.}
	\label{fig:conv_curves_logistic_regression}
\end{figure} 

\section{Conclusion}
\label{sec:Conclusion}
This paper introduces SBDP+, a framework for distributed optimization of non-convex, large-scale NLPs. It is characterized by decomposing the central NLP into small-scale NLPs at the agent level which are solved with neighbor-to-neighbor communication. Thus, SBDP+ pursues the approach of first decomposing and then optimizing. The coordination between agents takes place via sensitivities, which can be computed in a distributed fashion. The scheme is provably linearly convergent under standard regularity assumptions. Notably, SBDP+  ensures convergence for general coupling structures and avoids structural assumptions as required by SBDP.

Furthermore, we suggest an extension to deal with problems for which the Lagrangian is locally convex only on the constraints by incorporating constraint information of neighbors. 
The theoretical findings are validated in simulations, which also show that SBDP+ offers competitive performance compared to representative algorithms. Future work investigates the applicability of SBDP to convex optimization with non-differentiable objectives, different machine learning problems and explores the impact of inexact local NLP minimizations.

\appendix
\subsection{Proof of Lemma \ref{lem:optimality}}
\label{app:1}
The first statement of the Lemma directly follows from the structural equivalence of the central KKT conditions~\eqref{eq:centralKKT} and the local KKT conditions \eqref{eq:localKKT} for  $\vm d(\vm p\indq) = \vm y\indq$, which implies $\vm s\indq = \vm 0$, $\vm \nu\indq = \vm \lambda\indq $ and $\vm \kappa\indq = \vm \mu\indq $. Thus, the iterate $\vm p \indq$ in \eqref{eq:primal_dual_update_stacked} satisfies \eqref{eq:centralKKT} and therefore also represents a KKT point $\vm p\inds$ of the central NLP~\eqref{eq:central_NLP}. Local uniqueness follows from Assumptions~\ref{ass:constraint_regularity} and \ref{ass:uniform_SOSC}, see \cite[Thm. 3.2.2]{Fiacco}. The second statement is obtained by inserting $\vm p\indq = \vm p\inds$ into \eqref{eq:localKKT} for every $\agents$ and observing that $\vm s = \vm 0 $, $\vm \nu = \vm \lambda\inds$ and $\vm \kappa = \vm \mu\inds$ is a (local) solution to this set of equations under the validity of \eqref{eq:centralKKT}. Furthermore, since the SOSC, LICQ and SC are satisfied at $\vm s = \vm 0$ due to Statement i) in Assumption~\ref{ass:constraint_regularity} as well as Assumptions \ref{ass:localSOSC} and \ref{ass:localLICQ}, the KKT point $\vm \Phi(\vm p\inds) = [\vm 0\trans, {\vm \lambda\inds}\trans, {\vm \kappa\inds}\trans ]\trans$ is also locally unique, see \cite[Thm. 3.2.2]{Fiacco}. By the definition of $\vm d(\vm p)$ in the update law \eqref{eq:primal_dual_update_stacked} and the operator definition \eqref{eq:SBDP_plus_operator} it immediately follows that  $\vm \Phi(\vm p\inds) = \vm d(\vm p\inds)$ and $\vm p\inds = \vm T(\vm p\inds)$ locally.\hfill \QEDclosed 

\subsection{Proof of Lemma \ref{lem:solvability}} 
\label{app:2}
Since  $\vm \Phi(\vm p\inds) = [\vm 0\trans, {\vm \lambda\inds}\trans, {\vm \kappa\inds}\trans ]\trans$ is a local solution of the NLPs \eqref{eq:local_NLP}, Assumptions \ref{ass:localSOSC} and \ref{ass:localLICQ} imply that the SOSC and the LICQ hold for each NLP~\eqref{eq:local_NLP}, $\agents$ at $\vm p\inds$. Strict complementarity for each NLP \eqref{eq:local_NLP} at $\vm p\inds$ for all inequality constraints follows from the equality of $\vm h(\vm x \inds) = [\vm h_i\inds(\vm 0)]_{\agents}$ and Statement i) in Assumption~\ref{ass:constraint_regularity}. Thus, with all involved functions being at least three times continuously differentiable, the conditions of the basic sensitivity-theorem \cite[Thm. 3.2.2]{Fiacco} and \cite[Cor. 3.2.5]{Fiacco} are satisfied which allows its application to each NLP~\eqref{eq:local_NLP} and proves the statements of the Lemma. \hfill \QEDclosed

\subsection{Proof of Lemma \ref{lem:approximation}}
\label{app:3}
To obtain a first-order approximation of Algorithm \ref{alg:SENSI_search_direction}, we consider the (nonlinear) stacked update \eqref{eq:primal_dual_update_stacked} and derive a linear approximation for deviations around the central KKT point $\vm p\inds$. With $\vm p\indq = \vm p\inds + \Delta \vm p\indq$, we have the linearization of~\eqref{eq:primal_dual_update_stacked} 
\begin{align} \label{eq:linearized_algorithm}
	\Delta \vm p\indqn \!\!=\!\! \big(\vm I \!+\! \alpha \vm P\inds(\vm \Phi(\vm p\inds))(\nabla_{\vm p}\vm \Phi(\vm p\inds) \!\! - \!\! \nabla_{\vm p}\vm d(\vm p\inds))\big)\Delta \vm p \indq
\end{align}
where $ \nabla_{\vm p} \vm P\inds(\vm \Phi(\vm p\inds))(\vm \Phi(\vm p\inds) - \vm d(\vm p\inds))= \vm 0$ from Lemma~\ref{lem:optimality} is used. The Jacobian of $\vm d(\vm p)$ at $\vm p\inds$ is computed in straightforward manner as $\nabla_{\vm p}\vm d(\vm p\inds) = \blkdiag(\vm 0, \vm I, \vm I)=: \vm D$. Furthermore, according to Lemma \ref{lem:solvability}, we differentiate $\vm \Phi(\vm p)$ w.r.t. $\vm p$ to yield an explicit representation of $\nabla_{\vm p}\vm \Phi(\vm p\inds)$. It follows that 
\begin{align} \label{eq:Jacobian_y}
	\nabla_{\vm p}\vm \Phi(\vm p\inds) = -\vm M(\vm p\inds)^{-1} (\vm N(\vm p\inds) - \vm M(\vm p\inds) \vm D)\,,
\end{align}
where the matrix function $\vm M: \mathbb{R}^p \rightarrow \mathbb{R}^{p\times p}$ is the Jacobian matrix of the equalities of the local KKT conditions \eqref{eq:localKKT} w.r.t. $\vm y$ evaluated at $\vm \Phi(\vm p)$ and some $\vm p\indq = \vm p$. It is computed as
 \begin{align}\label{eq:M}
	\vm M(\vm p) := \begin{bmatrix}  \vm{ \bar L}(\vm p) &  \vm{\bar J}_g(\vm x)\trans &  \vm{\bar J}_h(\vm x)\trans \\ \vm {\bar J}_g (\vm x)& \vm 0 & \vm 0 \\
		\vm K \vm {\bar J}_h(\vm x) & \vm 0 & \vm {\bar H}(\vm x)
	\end{bmatrix}
\end{align} 
with the Hessians $ \vm{ \bar L}(\vm p) = \blkdiag(\nabla_{\vm s_1 \vm s_1}^2 \bar{L}_1,\dots,\nabla_{\vm s_M \vm s_M}^2 \bar{L}_M)$, constraint Jacobians $ \vm {\bar J}_g(\vm x) = \blkdiag(\nabla_{\vm s_1} \vm{\bar g}_1,\dots,\nabla_{\vm s_M} \vm{\bar g}_M)$ and $ \vm {\bar J}_h(\vm x) = \blkdiag(\nabla_{\vm s_1} \vm{\bar h}_1,\dots,\nabla_{\vm s_M} \vm{\bar h}_M)$ as well as $\vm {\bar H }(\vm x) = \diag(\vm{\bar h}_i)$, evaluated at  $\vm \Phi(\vm p)$ and some $\vm p\indq = \vm p$. Furthermore, the Jacobian matrix of the equalities of the local KKT conditions \eqref{eq:localKKT} w.r.t. $\vm p\indq$, evaluated at $\vm \Phi(\vm p)$ and some $\vm p\indq = \vm p$, is $ (\vm N(\vm p) - \vm M(\vm p) \vm D)$, where the matrix function $\vm N :\mathbb{R}^p \rightarrow \mathbb{R}^{p\times p}$ is computed as
\begin{align}\label{eq:N}
	\vm N(\vm p) := \begin{bmatrix} \vm  L(\vm p) &    \vm J_g(\vm x)\trans &  \vm J_h(\vm x)\trans \\  \vm J_g(\vm x) & \vm 0 & \vm 0 \\
		\vm K \vm J_g(\vm x) & \vm 0 & \vm {\bar H}(\vm x)
	\end{bmatrix}
\end{align}
with the sub-matrices $ \vm{L}(\vm p) = \sum_{\agents} \nabla_{\vm x\indq \vm x\indq} \bar{L}_i\indq $, $ \vm J_g(\vm x) = \nabla_{\vm x\indq} [\vm{\bar g}_i\indq]_{\agents}$, $ \vm J_h(\vm x)=\nabla_{\vm x\indq} [\vm{\bar h}_i\indq]_{\agents}$, evaluated at $\vm \Phi(\vm p)$ and some $\vm p\indq = \vm p$.
The regularity of $\vm M(\vm p)$ for $\vm p \in \mathcal{B}_{r_1}(\vm p\inds)$ follows from i) and ii) in Lemma~\ref{lem:solvability} which is based on SC, cf. Statement~i) of Assumption \ref{ass:constraint_regularity}, the local solutions $\vm \Phi(\vm p\inds)$ of \eqref{eq:local_NLP} satisfying the SOSC and LICQ at $\vm p\inds$, cf. Assumptions \ref{ass:localSOSC} and~\ref{ass:localLICQ}, see \cite[Thm. 3.2.2]{Fiacco}, and \cite[Thm. 14]{Fiacco2}. Evaluating \eqref{eq:M} and \eqref{eq:N} at $\vm p\inds$ and inserting $\vm D$ and \eqref{eq:Jacobian_y} into \eqref{eq:linearized_algorithm}, gives
\begin{align} \label{eq:linearized_algorithm2}
\Delta \vm p\indqn = (\vm I - \alpha \vm P(\vm p\inds) \vm M(\vm p\inds)^{-1} \vm N(\vm p\inds))\Delta \vm p \indq\,.
\end{align}
Considering the definition of the update matrix \eqref{eq:MixingMatrix}, we can factorize $\vm P(\vm p \inds)$ as the matrix product
\begin{equation} \label{eq:preconditioning_matrix}
\vm P(\vm p\inds) = \blkdiag(\vm I, -\beta \vm I, - \beta \vm I)  \vm M(\vm p\inds)
\end{equation}
which, inserted into \eqref{eq:linearized_algorithm2} and explicitly including the higher-order terms, leads to the statement of the Lemma. The property $\vm r(\cdot) \in \mathcal{O}(\|\Delta \vm p\indq\|^2)$ follows from the twice differentiability of $\vm \Phi(\cdot)$ established in Lemma \ref{lem:solvability}, see \cite[Cor. 3.2.5]{Fiacco}.
\hfill \QEDclosed

\subsection{Proof of Theorem \ref{th:conv}}
\label{app:4}
We prove the theorem in three steps. First,~i), we show that there exists a sufficiently small step size $\bar \alpha$ such that the linear part of \eqref{eq:linear_approx}, i.e., $\Delta \vm p\indqn = \Delta \vm p\indq - \alpha \vm A(\vm p\inds) \Delta \vm p\indq$, is asymptotically stable. Second, ii), we prove the local convergence of $\{\vm p\indq\}$ to $\vm p\inds$ and estimate the radius of convergence~$r$ with discrete-time Lyapunov-based arguments. Finally, iii), we derive the rate estimates \eqref{eq:linear_convergence_in_P_norm} and \eqref{eq:R_linear_convergence}.

i) For simplicity, we proceed by deriving an alternative representation of the linear part of \eqref{eq:linear_approx} by reordering the active and non-active constraints. This is possible since the active sets of NLP \eqref{eq:central_NLP} and \eqref{eq:local_NLP} match within $\mathcal{B}_{r_1}(\vm p\inds)$, see Lemma~\ref{lem:solvability}. For convenience define the sets $\mathcal{A}\indq(\vm s):=\cup_{\agents}\Gamma_i^{\mathcal{A}}(\mathcal{A}_i\indq(\vm s_i))$ and $\mathcal{I}\indq(\vm s):=\cup_{\agents}\Gamma_i^{\mathcal{I}}(\mathcal{I}_i\indq(\vm s_i))$. By considering the non-active constraint information in the local KKT conditions \eqref{eq:localKKT}, we rewrite the linear part of~\eqref{eq:linear_approx} as
\begin{align} \label{eq:transformed_linear_approx}
	\begin{bmatrix}
	\Delta \vm{ \bar p}\indqn \\
	\Delta \vm{ \bar \mu}\indqn
	\end{bmatrix}
	 \!=\! \begin{bmatrix}
		\vm I  \!-\! \alpha \vm {\bar A} \!&\! \alpha[\nabla_{\vm x} \vm h(\vm x\inds)]_{\mathcal{I}(\vm x\inds)}\trans \\ \vm 0 \!&\!  \vm I \!+\! \alpha\beta [\vm H(\vm x\inds)]_{\mathcal{I}(\vm x\inds)}
	\end{bmatrix}\!	\begin{bmatrix}
	\Delta \vm{ \bar p}\indq \\
	\Delta \vm{ \bar \mu}\indq
\end{bmatrix}.
\end{align}
Hereby, we define the partitioning $\vm{ \bar p}: = [\vm x\trans, \vm \lambda\trans, [\vm \mu]_{\mathcal{A}\indq(\vm s\indq)}\trans]\trans$ and $\vm{ \bar \mu} := [\vm \mu]_{\mathcal{I}^q(\vm s\indq)} = [\vm \mu]_{\mathcal{I}(\vm x\inds)} $ since the current and optimal active set are identical for $\vm p\indq \in \mathcal{B}_{r_1}(\vm p\inds)$, i.e., $\mathcal{I}^q(\vm s\indq) = \mathcal{I}(\vm x\inds)$, see Lemma \ref{lem:solvability}. Moreover, we have the sub-matrix
\begin{align} \label{eq:partitioned_A_matrix}
	\vm {\bar A} = \begin{bmatrix}
	\vm L(\vm p\inds) & \vm J(\vm x\inds)\trans \\ - \beta \vm {\bar U}\inds \vm J(\vm x\inds) & \vm 0
\end{bmatrix}
\end{align}
with the Hessian $L(\vm p\inds) = \nabla_{\vm x \vm x} L(\vm x\inds, \vm \lambda\inds, \vm \mu\inds)$, $\vm J(\vm x\inds) := [\nabla_{\vm x}\trans \vm g(\vm x\inds), [\nabla_{\vm x} \vm h(\vm x\inds)]_{\mathcal{A}(\vm x\inds)}\trans]\trans$ as the Jacobian of active constraints, and $\vm {\bar U}\inds =\diag([\mathds{1}\trans, [\vm \mu\inds]_{\mathcal{A}(\vm x\inds)]}\trans
])\succ \vm 0$, where $\mathds{1} \in \mathbb{R}^{n_g}$ is the unity vector. Furthermore, we use $[\vm \mu\inds]_{\mathcal{I}(\vm x\inds)} = \vm 0$ as well as $[\vm h(\vm x\inds)]_{\mathcal{A}(\vm x\inds)} = \vm 0$ for the simplification in \eqref{eq:partitioned_A_matrix}. The system \eqref{eq:transformed_linear_approx} is asymptotically stable if and only if both sub-matrices, i.1), $ \vm I - \alpha \vm {\bar A}$ and, i.2), $ \vm I+ \alpha\beta [\vm H(\vm x\inds)]_{\mathcal{I}(\vm x\inds)}$ are Schur stable.

Regarding i.1), it is well known that the matrix $ \vm I - \alpha \vm {\bar A} $ is Schur stable if and only if $\mathrm{Re}(\lambda_i)>0$ and $\max_{i}\{ |1 - \alpha\lambda_i|\}<1$, where $\lambda_i \in \mathbb{C}$, $\forall i \in \mathbb{N}_{[1,p - |\mathcal{I}(\vm x\inds)|]}$, are the eigenvalues of $\vm{\bar A}$. We prove the first condition, i.e., $\mathrm{Re}(\lambda_i)>0$. To this end, we inspect the eigenvalue problem $ \vm{\bar A} \vm v = \lambda \vm v$, where we partition the eigenvector $\vm v = [\vm v_x\trans , \vm v_y\trans ]\trans $ according to the primal and dual variables in $\vm{\bar p}$ which leads to
\begin{subequations}
\begin{align}
\vm L(\vm p\inds) \vm v_x +  \vm J(\vm x\inds)\trans\vm v_y &= \lambda \vm v_x \label{eq:eigenvalue1} \\
-\beta \vm {\bar U}\inds \vm J(\vm x\inds) \vm v_x &= \lambda\vm v_y \label{eq:eigenvalue2}\,.
\end{align}
\end{subequations}
We first exclude the case of a zero eigenvalue which is shown by contradiction. Suppose that $\lambda =0$. Then, we obtain from \eqref{eq:eigenvalue1} that $\vm L(\vm p\inds) \vm v_x =- (\vm J\inds)\trans\vm v_y$ which multiplied with $\vm v_x\trans$ from the left results in
\begin{equation}\label{eq:contradiction_argument}
	\vm v_x\trans \vm L(\vm p\inds) \vm v_x = (- \vm J(\vm x\inds)\vm v_x)\trans \vm v_y\,.
\end{equation}
 However, from \eqref{eq:eigenvalue2} and $\beta > 0$ it follows that $\vm J(\vm x\inds)\vm v_x = \vm 0 $ and Assumption \ref{ass:uniform_SOSC} we know that $\vm v_x\trans \vm L(\vm p\inds) \vm v_x >  0$ which is a contradiction to \eqref{eq:contradiction_argument}. Thus, we proved that $ \lambda \neq 0 $. Consequently, we solve \eqref{eq:eigenvalue2} for $\vm v_y$ and insert the expression into \eqref{eq:eigenvalue1}, leading to the quadratic eigenvalue problem
 \begin{equation}\label{eq:qaudratic_eigenvalue_problem}
(\lambda^2 \vm I - \vm L(\vm p\inds) \lambda + \beta  (\vm J(\vm x\inds)\trans \vm {\bar U}\inds \vm J(\vm x\inds))\vm v_x = \vm 0\,.
 \end{equation}
Since the matrices $\vm I$ and $\vm L(\vm p\inds)$ are positive definite and $\vm J(\vm x\inds)\trans \beta \vm {\bar U}\inds \vm J(\vm x\inds)$ is positive semi-definite, it follows that $\mathrm{Re}(\lambda)\geq0$ \cite[Sec. 3.8]{Tisseuer}. However, this can be strengthened as we know that $\lambda \neq 0 $ and that no purely imaginary eigenvalues can exist due to $\vm L(\vm p\inds) \succ \vm 0$ which together imply that $\mathrm{Re}(\lambda)>0$. Note that this holds for any $\beta >0$.  We show that the second condition  $\max_{i}\{|1 - \alpha\lambda_i| \}<1$ is satisfied for sufficiently small $\alpha$. With $\lambda_i = \mathrm{Re}(\lambda_i) + j\mathrm{Im}(\lambda_i)$, we have 
\begin{align}\label{eq:cond2_on_eigenvalues}
|1 - \alpha \mathrm{Re}(\lambda_i) - \alpha j\mathrm{Im}(\lambda_i)|<1\,, \smallspace \forall i \in \mathbb{N}_{[1,p - |\mathcal{I}(\vm x\inds)|]}
\end{align}
which can be rewritten as a condition on $\alpha$
\begin{align}
\alpha < \min_{i \in \mathbb{N}_{[1,p - |\mathcal{I}(\vm x\inds)|]}} \bigg\{  \frac{2 \mathrm{Re}(\lambda_i)}{|\lambda_i|^2} \bigg\} =: \alpha_1\,.
\end{align}
This proves the Schur stability of $\vm I - \alpha \vm {\bar A}$ for $\alpha <\alpha_1$.

Regarding i.2), the Schur stability of $ \vm I+ \alpha\beta [\vm H(\vm x\inds)]_{\mathcal{I}(\vm x\inds)}$ is equivalent to the condition $ |1 + \alpha\beta[\vm h(\vm x\inds)]_k|< 1 $, $\forall k \in \mathcal{I}(\vm x\inds)$. Since it holds that $[\vm h(\vm x\inds)]_k<0$ due to SC and the inactivity of the constraint, we arrive at the condition 
\begin{equation}\label{eq:alpha_2}
\alpha < \frac{2}{\beta \max_{k \in \mathcal{I}(\vm x\inds)}\{|[\vm h(\vm x\inds)]_k|\}}=: \alpha_2
\end{equation}
such that for $\alpha < \alpha_2$, $\vm I+ \alpha\beta [\vm H(\vm x\inds)]_{\mathcal{I}(\vm x\inds)}$ is Schur stable. 
This proves the asymptotic stability of the iteration \eqref{eq:transformed_linear_approx}  for $\alpha < \min\{\alpha_1,\alpha_2\}:= \bar \alpha$.

ii) We now turn toward the nonlinear iteration \eqref{eq:linear_approx} that is $ \Delta \vm{ p}\indq = (\vm I - \alpha \vm { A}) \Delta \vm{ p}\indq + \vm { r}(\|\Delta \vm{ p}\indq\|^2)$. The property $\vm { r}(\cdot) \in \mathcal{O}(\|\Delta \vm{ p}\indq\|^2)$ implies the existence of constants $r_2,d>0$ such that $\|\vm { r}(\Delta \vm p)\| \leq d \|\Delta \vm p\|^2$ for  $\|\Delta \vm p\| \leq r_2$. As $\vm I - \alpha \vm{ A}$ is Schur stable for $\alpha <  \bar \alpha$, the discrete-time Lyapunov equation
\begin{equation}\label{eq:Lypunov_equation}
(\vm I - \alpha \vm{ A})\trans \vm{\bar P} (\vm I - \alpha \vm{ A}) - \vm{\bar P} = - \vm {Q}
\end{equation}
has a positive-definite solution $\vm{\bar P} \succ \vm 0 $ for some $\vm{Q} \succ \vm 0$. Define the Lyapunov function $V(\Delta \vm p) =\Delta \vm p\trans \vm{\bar P} \Delta \vm p $. Then, for $\|\Delta \vm p\| \leq \min\{r_1,r_2\}$, the change $\Delta V (\Delta \vm p\indq) = V (\Delta \vm p\indqn) \!-\! V (\Delta \vm p\indq)$ along the nonlinear iteration \eqref{eq:transformed_linear_approx} is bounded as 
\begin{align} \label{eq:Lypunov_equation_condition}
\Delta V (\Delta \vm p\indq) \leq  ( a\|\Delta \vm p\indq\|^2  +  b\|\Delta \vm p\indq\| - c )\|\Delta \vm{ p}\indq\|^2
\end{align}
with constants $a= \bar \lambda(\vm{\bar P}) d^2>0$, $b=d\|(\vm I - \alpha\vm { A})\trans\vm{\bar P}\|>0 $, and $c=\underline \lambda(\vm Q)>0$. Let $e \in \mathbb{R}_{>0}$ be a constant such that $e<c$ and suppose that
\begin{align} \label{eq:condition_deltap}
\|\Delta \vm{ p }\indq\| \leq \frac{-b + \sqrt{b^2 + 4a(c-e)}}{2a}=:r_3\,.
\end{align}
Insert \eqref{eq:condition_deltap} for $\|\Delta \vm{ p }\indq\|$ only within the bracketed term in \eqref{eq:Lypunov_equation_condition}, which results in $\Delta V (\Delta \vm p\indq)\leq -e \| \Delta \vm p\indq\|^2$. Thus, the point $\Delta \vm{ p} = \vm 0$ is (locally) exponentially stable \cite[Thm. 28 in Sec 5.9]{Vidyasagar}.
To characterize the radius of convergence, let $r_4 = \min\{r_1,r_2,r_3\}$ and compute $v_1 = \min_{\|\Delta \vm p\|=r_4} V(\Delta \vm p)>0$. Take $v_2 \in (0,v_1)$ and define the set $\Omega=\{\vm p \in \mathcal{B}_{r_4}(\vm p\inds)\,|\, V(\Delta \vm p)\leq v_2\}$. Since $V(\vm 0)=0$ and $V(\cdot)$ is continuous, there exists some $r<r_4$ such that $\vm p \in \mathcal{B}_{r}(\vm p\inds)$ implies $\vm p \in \Omega \subset \mathcal{B}_{r_4}(\vm p\inds) $. Due to the Lyapunov decrease condition $\Delta V (\Delta \vm p\indq)\leq -e \| \Delta \vm p\indq\|^2$, we have $\vm p^q \in \Omega \subset \mathcal{B}_{r_4}(\vm p\inds)$ for all $q=1,2,\dots$ if $\vm p^0 \in \mathcal{B}_r(\vm p\inds)$.
 Thus, for $\vm p^0 \in \mathcal{B}_r(\vm p\inds)$ the iterates defined by Algorithm~\ref{alg:SENSI_search_direction} are bounded and have the limit point $\lim_{q \rightarrow \infty} \vm{ p}\indq = \vm p\inds$ \cite[Sec. 5.9]{Vidyasagar}. 

iii) To derive the Q-linear convergence property, let $\vm Q =(1-\delta) \vm{\bar P}$ with $0<\delta<1$, then according to the Lyapunov equation~\eqref{eq:Lypunov_equation}, we have $(\vm I - \alpha \vm{ A})\trans \vm{\bar P} (\vm I - \alpha \vm{ A}) = (1 - \delta)\vm{\bar P}$ which implies that $\|\vm I - \alpha \vm A(\vm p\inds)\|_{\vm{\bar P}}= \sqrt{(1 - \delta)}<1$. This in turn shows $\|\Delta \vm p\indq\|_{\vm{\bar P}}\leq \|\vm I - \alpha \vm A(\vm p\inds)\|_{\vm{\bar P}}\|\Delta \vm p\indqp\|_{\vm{\bar P}} =C \|\Delta \vm p\indqp\|_{\vm{\bar P}} $, proving \eqref{eq:linear_convergence_in_P_norm} at iteration $q\geq 1$ with $C:=\|\vm I - \alpha \vm A(\vm p\inds)\|_{\vm{\bar P}}$. To derive the R-linear convergence property, consider that at each iteration $q=1,2, \dots$ it holds that 
\begin{align}\label{eq:Lyapunov_decrease}
&V(\Delta \vm { p}\indq)\leq V(\Delta \vm { p}\indqp)  -e \| \Delta \vm { p}\indqp\|^2 \leq p V(\Delta \vm { p}\indqp)  
\end{align}  
with the contraction factor $p := (1- e/\bar \lambda(\vm{\bar P}) )$. Recursively applying \eqref{eq:Lyapunov_decrease} and $\underline \lambda(\vm{\bar P})\|\Delta\vm {p}\|^2 \leq V(\Delta \vm{ p})\leq  \bar \lambda (\vm{\bar P}) \|\Delta \vm {   p}\|^2$ we arrive at $
\| \Delta \vm p\indq \| \leq C_0 C_1^{q}\|\Delta \vm p^0\|$
with $C_0 = \sqrt{\bar \lambda(\vm{\bar P}) /\underline \lambda(\vm{\bar P})}$ and $C_1 = \sqrt{p}$ which implies R-linear convergence \cite{Nocedal}.

\subsection{Proof of Corollary \ref{cor:decoupled_constraints}}
\label{app:6}
Setting $\vm P(\vm p) = \vm I$ and considering \eqref{eq:linearized_algorithm2} and \eqref{eq:preconditioning_matrix}, the linear part of the first-order approximation \eqref{eq:linear_approx} changes to 
$	\Delta \vm p\indqn = (\vm I - \vm M(\vm p\inds)^{-1} (\vm N(\vm p\inds) - \vm M(\vm p\inds) \vm D) - \vm D) \Delta \vm p\indq $. With $[\vm \kappa\inds]_{\mathcal{I}(\vm x\inds)}= \vm 0$, we obtain a partitioning similar to \eqref{eq:transformed_linear_approx}
 \begin{align} \label{eq:transformed_linear_approx_2}
 	\begin{bmatrix}
 		\Delta \vm{ \bar p}\indqn \\
 		\Delta \vm{ \bar \mu}\indqn
 	\end{bmatrix}
 	\!=\! \begin{bmatrix}
 		\vm I  \!-\! \alpha \vm {\bar A} & \vm 0 \\ \vm 0 \!&\!  (1-\alpha) \vm I
 	\end{bmatrix}\!	\begin{bmatrix}
 		\Delta \vm{ \bar p}\indq \\
 		\Delta \vm{ \bar \mu}\indq
 	\end{bmatrix}
 \end{align}
 with the sub-matrix
\begin{align}\label{eq:partitioned_A_matrix_corr1}
\vm{\bar{A}} =\begin{bmatrix}
	\vm{ \bar L}(\vm p\inds)& \!\!\vm J(\vm x\inds)\trans \\ \vm J(\vm x\inds)& \vm 0
\end{bmatrix}^{-1} \begin{bmatrix}
	\vm L(\vm p\inds) & \vm J(\vm x\inds)\trans \\ \vm J(\vm x\inds) & \vm 0
\end{bmatrix}
\end{align}
which can be viewed as preconditioning the KKT matrix of NLP \eqref{eq:central_NLP} with the indefinite matrix $\vm M(\vm p\inds$) such that we apply Theorem 2.1 in \cite{Keller} which states that $\vm{\bar{A}}$ has an eigenvalue at $1$ with multiplicity $2(n_g + |\mathcal{A}(\vm x\inds)|)$ and $n - (n_g + |\mathcal{A}(\vm x\inds)|)$ eigenvalues defined by the generalized eigenvalue problem $\vm Z\trans \vm{ \bar L}(\vm p\inds) \vm Z \vm v = \lambda \vm Z\trans \vm L(\vm p\inds) \vm Z $, where $\vm Z$ is a basis of the nullspace of $\vm J(\vm x\inds)$. Since both $\vm{ \bar L}(\vm p\inds)$ and $\vm{L}(\vm p\inds)$ are positive definite by Assumption \ref{ass:uniform_SOSC} and \ref{ass:localSOSC}, respectively, and $\vm Z$ has full column rank by definition, this generalized eigenvalue problem has only positive real eigenvalues.
Thus, the proof of convergence follows the proof of Theorem \ref{th:conv} from \eqref{eq:cond2_on_eigenvalues} onward with the modified dynamic matrix \eqref{eq:partitioned_A_matrix_corr1} and the fact that the iteration $	\Delta \vm{ \bar \mu}\indqn 
= (1- \alpha) \vm I \Delta \vm{ \bar \mu}\indq$ is trivially stable for $\alpha \in (0,1]$ such that we set $\alpha_2=1$ in \eqref{eq:alpha_2}. \hfill \QEDclosed

\subsection{Proof of Lemma \ref{lem:approximation_v2}}
\label{app:7}
Similar to \eqref{eq:preconditioning_matrix} in the proof of Lemma \ref{lem:approximation} and considering the modified update matrix \eqref{eq:primal_dual_update_stacked_v2}, we can factorize $\vm P(\vm p\inds)$ as
\begin{equation} \label{eq:preconditioning_matrix_v2}
	\vm P(\vm p\inds) = \begin{bmatrix}
		\vm I & \gamma \vm J_g(\vm x\inds)\trans & \gamma \vm J_h(\vm x\inds)\trans \vm U\inds  \\
		\vm 0 & -\beta \vm I & \vm 0 \\
		\vm 0 & \vm 0 &  -\beta \vm I
	\end{bmatrix} \vm M(\vm p\inds)
\end{equation}
and insert $\vm P(\vm p\inds)$ into \eqref{eq:linearized_algorithm2} which leads to \eqref{eq:Dynamic_matrix_v2}. \hfill \QEDclosed

\subsection{Proof of Theorem \ref{th:conv_v2}}
\label{app:8}
The steps i) to iii) in the proof of Theorem \ref{th:conv} are also applied here. However, due the modified linear approximation \eqref{eq:Dynamic_matrix_v2}, we consider the following partitioned system instead of \eqref{eq:partitioned_A_matrix} with the modified sub-matrix 
\begin{align} \label{eq:partitioned_A_matrix_v2}
	\vm {\bar A} = \begin{bmatrix}
		\vm L(\vm p\inds) + \gamma \vm R(\vm p\inds) & \vm J(\vm x\inds)\trans \\ - \beta \vm {\bar U}\inds\vm J(\vm x\inds)& \vm 0
	\end{bmatrix}\,.
\end{align}
According to \cite[Lem. 3.2.1]{Bertsekas2}, $\vm L(\vm p\inds) + \gamma \vm R(\vm p\inds)$ is positive definite for sufficiently large $\gamma > \bar \gamma$, since $\vm L(\vm p\inds)$ is positive definite on the nullspace of the positive semi-definite matrix $ \vm R(\vm p\inds) = \gamma \vm J(\vm x\inds)\trans (\vm{\bar U}\inds)^2 \vm J(\vm x\inds)$ by the second-order sufficiency condition in Assumption \ref{ass:SOSC}. Following the same arguments from \eqref{eq:partitioned_A_matrix} in the proof of Theorem \ref{th:conv} onward, we arrive at the quadratic eigenvalue problem 
\begin{align} \label{eq:modified_eigenvalues}
	(\lambda^2 \vm I - \vm {\tilde L}(\vm p\inds) \lambda  +\beta  (\vm J(\vm x\inds)\trans \vm {\bar U}\inds \vm J(\vm x\inds)))\vm v_x =\vm 0
\end{align} 
with  $\vm {\tilde L}(\vm p\inds) = \vm L(\vm p\inds) + \gamma \vm R(\vm p\inds)$. For \eqref{eq:modified_eigenvalues} it holds that $\mathrm{Re}(\lambda)>0$. This follows from the fact that  $\vm {\tilde L}(\vm p\inds)$ is positive definite for $\gamma > \bar \gamma$, $\beta \vm J(\vm x\inds)\trans \vm {\bar U}\inds \vm J(\vm x\inds)$ is positive semi-definite, and $\lambda \neq 0$. The proof of convergence then follows the proof of Theorem~\ref{th:conv} with \eqref{eq:partitioned_A_matrix_v2} instead of \eqref{eq:partitioned_A_matrix}. \hfill \QEDclosed

\subsection{Proof of Corollary \ref{cor:stricter_SOSC}}
\label{app:9}
Similar to \eqref{eq:preconditioning_matrix} in the proof of Lemma \ref{lem:approximation_v2}, we factorize the update law \eqref{eq:primal_dual_update_v3} from a centralized viewpoint to obtain
\begin{equation} \label{eq:preconditioning_matrix_v3}
	\vm P(\vm p\inds) = \begin{bmatrix}
		\vm I & \gamma \vm {\tilde J}_g(\vm x\inds)\trans & \gamma \vm {\tilde J}_h(\vm x\inds)\trans \vm U\inds  \\
		\vm 0 & -\beta \vm I & \vm 0 \\
		\vm 0 & \vm 0 &  -\beta \vm I
	\end{bmatrix} \vm M(\vm p\inds)
\end{equation}
with the partitioned Jacobians $\vm {\tilde J}_g(\vm x) = [ \nabla_{\vm x} \vm {\bar g}(\vm x)\trans, \vm 0\trans]\trans$ and $\vm {\tilde J}_h(\vm x) = [ \nabla_{\vm x} \vm {\bar h}(\vm x)\trans, \vm 0\trans]\trans$, derived from the update law \eqref{eq:primal_dual_update_v3}. Similar to \eqref{eq:partitioned_A_matrix_v2}, this leads to 
the modified sub-matrix 
\begin{align} \label{eq:partitioned_A_matrix_v3}
	\vm {\bar A} = \begin{bmatrix}
		\vm L(\vm p\inds) + \gamma \vm{\tilde J}(\vm x\inds)\trans (\vm {\bar U}\inds)^2 \vm{J}(\vm x\inds)& \vm J(\vm x\inds)\trans \\ - \beta \vm {\bar U}\inds \vm J(\vm x\inds)& \vm 0
	\end{bmatrix}
\end{align}
with  $\vm {\tilde J}(\vm x\inds) =: [\nabla_{\vm x}\trans \vm{ \bar g}(\vm x\inds), [\nabla_{\vm x} \vm{ \bar h}(\vm x\inds)]_{\mathcal{A}(\vm x\inds)}\trans, \vm 0\trans, \vm 0\trans]\trans$. Since $\vm {\bar U\inds}$ is diagonal with strictly positive values, we can evaluate
\begin{align}
&\vm{\tilde J}(\vm x\inds)\trans \vm{J}(\vm x\inds) = \\
& \nabla_{\vm x}\trans \vm{ \bar g}(\vm x\inds)\nabla_{\vm x} \vm{ \bar g}(\vm x\inds)+ [\nabla_{\vm x} \vm{ \bar h}(\vm x\inds)]_{\mathcal{A}(\vm x\inds)} \trans [\nabla_{\vm x} \vm{ \bar h}(\vm x\inds)]_{\mathcal{A}(\vm x\inds)} \nonumber
\end{align}
which is positive semi-definite. As in Theorem \ref{th:conv_v2}, \cite[Lem. 3.2.1]{Bertsekas2} implies that $\vm L(\vm p\inds) + \gamma \vm{\tilde J}(\vm x\inds)\trans (\vm {\bar U}\inds)^2 \vm{J}(\vm x\inds)$ is positive definite for sufficiently large $\gamma > \bar \gamma$, since $\vm L(\vm p\inds)$ is positive definite on the nullspace of $\nabla_{\vm x} \vm{ \bar g}(\vm x\inds)$ and $[\nabla_{\vm x} \vm{ \bar h}(\vm x\inds)]_{\mathcal{A}(\vm x\inds)}$ by Assumption \ref{ass:regularity_local_constraints}. The proof of convergence follows the proof of Theorem~\ref{th:conv} with \eqref{eq:partitioned_A_matrix_v3} instead of \eqref{eq:partitioned_A_matrix}. \hfill \QEDclosed
\bibliographystyle{IEEEtran}
\bibliography{SBDP_plus_bib}
\end{document}